\newtheorem{lemma}{Lemma}[section]
\newtheorem{theorem}[lemma]{Theorem}
\newtheorem{prop}[lemma]{Proposition}
\theoremstyle{definition}
\newtheorem{question}[lemma]{Question}
\theoremstyle{remark}
\newcommand*{\abs}[1]{\lvert #1\rvert}
\newcommand*{\floor}[1]{\lfloor #1\rfloor}
\newcommand{\ie}{i.e.\ }
\title{Determining triangulations and quadrangulations by boundary distances}
\author{John Haslegrave}
\begin{document}
\maketitle
\begin{abstract}We show that if all internal vertices of a disc triangulation have degree at least $6$, then the full structure can be determined from the pairwise graph distances between boundary vertices. A similar result holds for disc quadrangulations with all internal vertices having degree at least $4$. This confirms a conjecture of Itai Benjamini. Both degree bounds are best possible, and correspond to local non-positive curvature. However, we show that a natural conjecture for a ``mixed'' version of the two results is not true.
\end{abstract}

\section{Introduction}\label{sec:intro}
Let $T$ be a planar near-triangulation (that is, a planar graph such that every face other than the outer face is a triangle) with a simple closed boundary of length $n$, and suppose that all distances in $T$ between boundary vertices are known. Under what conditions does this allow the precise structure of $T$ to be determined? In general this is not the case if internal vertices of degree $5$ (or below) are permitted. For any near-triangulation we can glue an icosahedron into any face, adding nine vertices of degree $5$, without changing any distances between existing vertices. This question was first posed by Itai Benjamini \cite{Itai}, who conjectured that provided all internal vertices have degree at least $6$, the near-triangulation can be reconstructed.

Indeed, degree $5$ is the natural boundary to this sort of counterexample. If all internal vertices have degree at least $6$, then no triangle can have internal vertices. To see this, suppose $abc$ is a triangle surrounding $m\geq 1$ internal vertices. Then the total number of edges between and inside $abc$ is at least $3m+9/2$, since each of $a,b,c$ has at least one neighbour inside the triangle, but a planar graph on $m+3$ vertices can have at most $3m+3$ edges. 

Similarly, we might instead consider a planar near-quadrangulation. If internal vertices of degree $3$ are permitted then we may glue a cube onto a face, but again this type of example cannot occur if the minimum degree of internal vertices is at least $4$. Both of these conditions (minimum degree $6$ for triangulations and minimum degree $4$ for quadrangulations) correspond to having non-positive curvature at each internal vertex.

In more generality, if we consider a planar map with simple closed boundary which may have faces of different sizes, then the natural interpretation of what it means to have non-positive curvature at each internal vertex is to look locally at the faces surrounding a vertex, considering each as a regular polygon with the appropriate number of sides, and require the angles around the vertex to sum to at least $2\pi$. For example, if all faces are triangles or quadrangles, and an internal vertex $v$ meets $t(v)$ triangles and $q(v)$ quadrangles, then this condition becomes $2t(v)+3q(v)\geq 12$. As we shall see, this is indeed the right definition, in the sense that it precludes the type of simple counterexamples discussed above. However, perhaps surprisingly in light of our main results, this condition is not sufficient to allow reconstruction of the graph from boundary distances, even for graphs with only these two types of faces; a class of counterexamples is given in Section \ref{sec:counter}.

However, we show that in the cases of pure near-triangulations and pure near-quadrangulations the natural condition is sufficient to allow reconstruction.
\begin{theorem}\label{main:tri}Let $T$ be a near-triangulation with a simple closed boundary such that all internal vertices have degree at least $6$. Then the distances between boundary vertices determine $T$.
\end{theorem}
\begin{theorem}\label{main:quad}Let $Q$ be a near-quadrangulation with a simple closed boundary such that all internal vertices have degree at least $4$. Then the distances between boundary vertices determine $Q$.
\end{theorem}

Note that the information we receive in each case is the set of boundary vertices and all pairwise distances between them. In particular, we are \textit{not} given the cyclic order of boundary vertices. However, this information can easily be recovered as an initial step.\footnote{I am grateful to Alex Scott for this observation.}
\begin{lemma}\label{lem:hamilton}Let $G$ be any planar graph with simple closed boundary. Then the set of pairs of boundary vertices at distance $1$ determines the cyclic ordering of boundary vertices.
\end{lemma}
\begin{proof}Let $G'$ be the subgraph induced on the boundary vertices, and consider an edge $uv\in E(G')$. If $uv$ is a boundary edge, \ie $u$ and $v$ are consecutive boundary vertices, then $G'\setminus\{u,v\}$ is connected, since the remaining boundary edges form a spanning path. However, if $uv$ is a chord then no other chord can cross it, and so $G'\setminus\{u,v\}$ has two components consisting of the two sections of the boundary between $u$ and $v$.
\end{proof}

Theorems \ref{main:tri} and \ref{main:quad} may be viewed as discrete versions of the continuous inverse problem of reconstructing (up to isometry) the metric on a compact Riemannian metric with specified boundary from the distance function on pairs of boundary points. This problem originates from rigidity questions in Riemannian geometry \cite{Mic81,Cro91,Gro91}, and a manifold is said to be \textit{boundary rigid} if any other Riemannian manifold with the same boundary and boundary distance function is isometric to it. Michel \cite{Mic81} conjectured that any simple Riemannian manifold is boundary rigid; the condition of being simple requires that the manifold is simply-connected with strictly convex boundary and that geodesics have no conjugate points. In the two-dimensional case this was confirmed for simple surfaces of negative curvature by Croke \cite{Cro90} and for all simple surfaces by Pestov and Uhlmann \cite{PU05}. Note, however, that these results in the continuous case require strong assumptions on the boundary, whereas our results hold for any boundary which encloses a simply-connected region.

In addition to the reconstruction question, we might also ask about recognition: is there an efficient way to determine whether a given list of distances arise from such a graph? While the proofs of our main results do indeed give a way to do this, the recognition question also makes sense in contexts where reconstruction is not possible. In Section \ref{sec:algo} we discuss such generalisations.

Throughout, we write $d_G(\cdot,\cdot)$ for the graph distance within $G$ (or just $d(\cdot,\cdot)$ if there is only one possible graph); to avoid confusion, we write $\deg_G(\cdot)$ (or $\deg(\cdot)$) for the vertex degree.

\section{Disc triangulations}
In this section we prove Theorem \ref{main:tri}. Our approach is induction on $n$, the number of boundary vertices (the observation in Section \ref{sec:intro} that every triangle is a face gives the result for $n=3$). We show that certain types of configuration allow reduction to smaller cases. For a configuration to allow such a reduction, we require two properties: first, that the presence of this configuration within $T$ can be recognised from the boundary distances; secondly, that after removing the configuration what remains consists of one or more smaller disc triangulations, and all boundary distances for these may be deduced from the original boundary distances. By Lemma \ref{lem:hamilton}, we may assume that the cyclic ordering of boundary vertices is known.

A simple example of such a configuration is a chord, that is, if $T$ contains an internal edge between two boundary vertices. A chord may be identified because its ends are a non-consecutive pair of boundary vertices at distance $1$, giving the first property. Assuming such a chord exists, let the boundary vertices, in order, be $v_1,\ldots,v_n$ and suppose without loss of generality that $d(v_1,v_k)=1$ for $k\neq 2,n$. Then we may split $T$ into two near-triangulations $T_1$ and $T_2$ bounded respectively by the cycles $v_1\cdots v_k v_1$ and $v_k\cdots v_nv_1v_k$. No shortest path (in $T$) between two vertices of $T_1$ can use vertices outside $T_1$, since if a path leaves $T_1$ and subsequently returns to it then the section outside $T_1$ can be replaced by the single edge $v_1v_k$ (in some direction). Thus for each pair $(v_i,v_j)$ of boundary vertices of $T_1$ we have $d_{T_1}(v_i,v_j)=d_T(v_i,v_j)$. The same applies to $T_2$, giving the second property.

Of course, in order to make this approach into an inductive proof, we will need our set of reducible configurations to be sufficiently large that at least one of them is guaranteed to occur (except in the base case). While there are some other ``small'' configurations (in the sense of depending only on one or two internal vertices) that allow reductions, in general we will need to consider configurations of unbounded size. The general form of these will be a strip of faces running along the boundary, which we can remove to leave a single near-triangulation consisting of all remaining faces.

A key fact which we will need in order to deduce the boundary distances of the smaller near-triangulation is that in any near-triangulation where internal vertices all have degree at least $6$, the three vertices of any face cannot be equidistant from any fourth point; we prove this in Section \ref{sec:meridian}. We then define the main configurations we use in Section \ref{sec:nice}, and show that they may be identified and permit reductions. After giving some other simpler reduction configurations in Section \ref{sec:misc}, we complete the proof in Section \ref{sec:main}.

In what follows, a \textit{disc triangulation} is a plane graph with a simple closed boundary and all internal faces being triangles. It is \textit{chordless} if no internal edge joins two boundary vertices. We define the \textit{boundary faces} of $T$ to be those which share an edge with the boundary cycle.

\subsection{Meridians and no equidistant triangle}\label{sec:meridian}

\begin{lemma}\label{geodesic}Let $T$ be a disc triangulation with boundary $v_0v_1\cdots v_{n-1}v_0$ such that every internal vertex has degree at least $6$. Suppose that $v_1,\ldots,v_{k-1}$ each have degree at least $4$. Then $d_T(v_0,v_k)=k$, and furthermore $v_0v_1\cdots v_k$ is the unique shortest path between $v_0$ and $v_k$.
\end{lemma}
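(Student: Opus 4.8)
The plan is to read the two degree hypotheses as statements about discrete curvature and then to show that $P:=v_0v_1\cdots v_k$ is a genuine, and unique, geodesic because it has nonpositive turning everywhere along its interior. Concretely, a boundary vertex of a disc triangulation with $T$-degree $d$ has exactly $d-1$ incident faces, so assigning each triangle corner the angle $\pi/3$ gives interior angle $(d-1)\pi/3$ and exterior (turning) angle $(4-d)\pi/3$. Thus the hypothesis that $v_1,\ldots,v_{k-1}$ have degree at least $4$ says precisely that $P$ turns by a nonpositive amount at each of its interior vertices; this is what I would take as the defining property of a meridian. The hypothesis that every internal vertex has degree at least $6$ likewise says that interior vertices carry nonpositive curvature $6-\deg\le 0$ (in units of $\pi/3$).

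First I would suppose, for contradiction, that some path $W$ from $v_0$ to $v_k$ with $\abs{W}\le k$ differs from $P$, and choose $W$ so that the region $D$ it cuts off together with $P$ is inclusion-minimal; then $D$ is a disc triangulation. Its interior vertices are interior vertices of $T$, hence still have degree at least $6$; and because $v_1,\ldots,v_{k-1}$ lie on the boundary of $T$, \emph{all} of their faces lie in $D$, so they keep degree at least $4$ inside $D$. I would then apply combinatorial Gauss--Bonnet to $D$: summing $6-\deg$ over interior vertices and $4-\deg$ over boundary vertices gives exactly $6$ (equivalently, the boundary turning totals $2\pi$). All interior vertices and all $P$-vertices contribute nonpositively, so the entire positive curvature must sit on the interior vertices of $W$ and on the two corners $v_0,v_k$. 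Since $W$ is a shortest path, a no-shortcut argument forces at least two faces of $D$ at each interior vertex of $W$ (otherwise its two neighbours on $W$ span a single triangle, giving a shorter path), which bounds each such contribution.

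The hard part is that Gauss--Bonnet by itself is not enough: the budget above is consistent with a thin, almost flat strip between $W$ and $P$, and extracts only that $W$ has length at least $3$, never contradicting $\abs{W}\le k$. The missing ingredient is the length constraint together with the asymmetry that every face at a $v_i$ already belongs to $D$, and I expect this to be the crux. I would try to close it in one of two ways: either run the whole argument as an induction on $k$, using minimality of $D$ to exhibit a boundary face at the $v_0$-end that can be peeled off to reduce to a meridian of length $k-1$; or, in the spirit of the continuous boundary-rigidity results quoted in the introduction, build a distance-nonincreasing retraction of $D$ onto $P$ (a discrete nearest-point projection, well defined precisely because the curvature is nonpositive) and observe that its restriction to $W$ must sweep across all of $P$, so that $\abs{W}\ge k$ with equality only when $W=P$. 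Proving that this projection is genuinely $1$-Lipschitz is the discrete analogue of uniqueness of geodesics in a $\mathrm{CAT}(0)$ space, and I anticipate it being the main technical obstacle; uniqueness of the shortest path would then follow from the same equality analysis.
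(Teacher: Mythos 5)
There is a genuine gap here, and you have in fact diagnosed it yourself: the combinatorial Gauss--Bonnet budget for the region $D$ between $W$ and $P$ only forces about two units of positive turning onto the interior of $W$ (plus the two corners), which yields $\abs{W}\geq 3$ and nothing more. Everything after that point in your write-up is a plan, not a proof. Of the two proposed repairs, the ``peel off a boundary face at the $v_0$-end'' induction is not obviously available (the minimal region $D$ need not have a face incident with $v_0$ that can be removed while preserving both the degree hypotheses and the property that $W$ is still a competing path), and the ``distance-nonincreasing retraction of $D$ onto $P$'' is essentially a restatement of the lemma itself: proving such a projection is $1$-Lipschitz in this discrete setting is exactly the uniqueness-of-geodesics assertion you are trying to establish, so no progress has been made. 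A smaller but real issue is that you never rule out $W$ passing through some $v_j$ with $0<j<k$; without that, the region $D$ is not a single disc and the accounting breaks. (The paper handles this by taking a counterexample minimal in the number of vertices and in $k$, so that a path through $v_j$ would contradict minimality on one of the two subpaths.)

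For comparison, the paper's proof does not run Gauss--Bonnet on the lune between the two paths. It first proves a separation statement --- no $v_i,v_j$ with $j>i+1$ are adjacent or share a common neighbour other than $v_{i+1}$ --- by applying the isoperimetric inequality of Angel, Benjamini and Horesh to the cycle such a chord or common neighbour would create; this is where the degree-$6$ hypothesis does quantitative work that your curvature budget cannot reproduce. It then takes a vertex-minimal counterexample, shows the competing path avoids $v_1,\ldots,v_{k-1}$, and \emph{deletes} the strip of at least $2k-1$ faces incident with these vertices. The far side of that strip is a path $P'$ of length at least $k+1$ whose internal vertices still have degree at least $4$ in the smaller triangulation, so minimality applies to $P'$ and gives the contradiction. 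If you want to salvage your approach, the key missing ingredient is some such global quantitative input (the ABH inequality, or an equivalent) that converts ``nonpositive curvature'' into an actual length comparison between $W$ and $P$; local curvature counting alone will not do it, as your own strip example shows.
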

\begin{proof}
First we argue that for every pair $0\leq i<j\leq k$ with $j>i+1$, the vertices $v_i,v_j$ are not adjacent and do not have a common neighbour other than $v_{i+1}$. Indeed, suppose not and consider a counterexample minimising $j-i$. Consider the boundary faces with edges $v_iv_{i+1},\ldots,v_{j-1}v_j$. The third vertices of all these faces are distinct, using minimality and the fact that each of $v_1,\ldots,v_{k-1}$ has degree at least $4$. Thus if $v_i$ and $v_j$ are adjacent then the cycle $v_i\cdots v_j$ is chordless and encloses a triangulation where all internal vertices have degree at least $6$ and at least $j-i-1$ internal vertices are adjacent to the boundary. Similarly, if $v_i$ and $v_j$ have a common neighbour $x$ (other than $v_{i+1}$), then the cycle $v_i\cdots v_jx$ has the same properties. In either case, it follows that the inner subtriangulation remaining after removing the cycle of length at most $j-i+1$ has boundary length at least $j-i-2$. However, this gives a contradiction together with \cite[Lemma 2.2]{ABH}, which states that if $T'$ is a triangulation of a cycle of order $\ell$ with all internal vertices having degree at least $6$ then the inner subtriangulation remaining after removing the cycle has boundary length at most $\ell-6$.

Now we prove the desired result. Suppose it is not true and take a counterexample with as few total vertices in the near-triangulation as possible, and among such counterexamples, with $k$ as small as possible. Let $P$ be either a shorter path or a different path of length $k$ between $v_0$ and $v_k$. If $v_j$ is on $P$ for some $1\leq j<k$ then either the section of $P$ from $v_0$ to $v_j$ or the section from $v_j$ to $v_k$ contradicts minimality of the original counterexample. Thus $P$ avoids $v_1,\ldots,v_{k-1}$.

Consider the set of faces incident with at least one of $v_1,\ldots,v_{k-1}$. The number of such faces is $k+\sum_{i=1}^{k-1}(\deg(v_i)-3)\geq 2k-1$. There is another path $P'$ from $v_0$ to $v_k$ around the other side of this strip of faces, which has length at least $k+1$ and is disjoint from $\{v_1,\ldots,v_{k-1}\}$; in particular $P'$ is longer than $P$, so is not a shortest path in $T\setminus\{v_1,\ldots,v_{k-1}\}$. Suppose that $P'$ consists entirely of internal vertices of $T$. Then removing $v_1,\ldots,v_{k-1}$ leaves a disc triangulation $T'$ for which $P'$ forms part of the boundary, and since each vertex on $P'$, other than $v_0$ and $v_k$, was adjacent to at most two of the removed vertices, we conclude that it has degree at least $4$ in the remaining near-triangulation. Since $P$ lies entirely in $T'$, it follows that $P'$ is not a shortest path between $v_0$ and $v_k$ in $T'$, contradicting minimality of the original example. If some vertices of $P'$ are boundary vertices of $T$, we may remove any bridges from $T'$ and split any cutvertices to leave two or more disc triangulations, then apply the same argument to each section of $P'$ that remains, noting that every vertex of $P'$ which lies strictly inside such a section has degree at least $4$.
\end{proof}
\begin{lemma}\label{geodesic2}Let $T$ be a triangulation of the cycle $v_0v_1\cdots v_{n-1}v_0$ with every internal vertex having degree at least $6$. Suppose that $v_1,\ldots,v_{k-1}$ each have degree at least $3$, that if $\deg(v_i)=3$ then there is some $h$ with $0<h<i$ and $\deg(v_h)\geq 5$, and that if there are two values $0<i<j<k$ with $\deg(v_i)=\deg(v_j)=3$ then there is some $h$ with $i<h<j$ and $\deg(v_h)\geq 5$. Then $d_T(v_0,v_k)=k$, and furthermore every path of length exactly $k$ uses $v_1$.
\end{lemma}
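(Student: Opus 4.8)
The plan is to run an induction on the number of vertices of $T$ (with a short secondary induction in $k$ for the small cases $k\le 2$), proving both conclusions together; the engine is a potential function on the strip of faces running along $v_1,\dots,v_{k-1}$. First I would record two consequences of the hypotheses. Since a degree-$3$ vertex $v_1$ would require a witness $v_h$ with $0<h<1$, we must have $\deg(v_1)\ge 4$; and since two \emph{consecutive} degree-$3$ vertices would leave no room for the required intermediate witness, no two consecutive $v_i$ have degree $3$. Sending the first degree-$3$ vertex to a preceding degree-$\ge5$ vertex and each later one to an intermediate degree-$\ge5$ vertex gives an injection, so there are at least as many degree-$\ge5$ as degree-$3$ vertices among $v_1,\dots,v_{k-1}$, whence $\sum_{i=1}^{k-1}(\deg(v_i)-3)\ge k-1$. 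As in Lemma~\ref{geodesic} I would first establish a structural statement, via \cite[Lemma 2.2]{ABH}, ruling out unwanted chords and shared neighbours, which is enough to guarantee that the configuration below is an embedded disc.

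Next I would isolate the strip $R$ consisting of all faces incident with at least one of $v_1,\dots,v_{k-1}$. Because every apex of every $v_i$ lies on the far side, $R$ has no interior vertices: it is a triangulated polygon whose boundary is the outer arc $O=v_0v_1\cdots v_k$ (length $k$) together with an inner arc $I=v_0w_1\cdots w_{m-1}v_k$. The number of faces is $k+\sum_{i=1}^{k-1}(\deg(v_i)-3)\ge 2k-1$, and since a triangulated polygon on $k+m$ vertices has $k+m-2$ faces, this forces $m\ge k+1$. I would then define $\phi$ on $R$ by $\phi(v_i)=i$ and $\phi(w)=(a+b)/2$ when the apex $w$ sees exactly the consecutive block $v_a,\dots,v_b$ of $O$; since consecutive $v_i$ are never both of degree $3$, each block has $b\le a+2$, and one checks that $\phi$ is $1$-Lipschitz on $R$. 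As $\phi(v_k)-\phi(v_0)=k$, every path in $R$ from $v_0$ to $v_k$ has length $\ge k$, with equality forcing each edge to raise $\phi$ by exactly $1$. The apex $w_1$ of the face $v_0v_1w_1$ sees only $\{v_0,v_1\}$ (here I use $\deg(v_1)\ge4$, so $w_1$ is not adjacent to $v_2$), giving $\phi(w_1)=\tfrac12$; hence a length-$k$ path cannot begin $v_0w_1$, so it begins $v_0v_1$ and in particular uses $v_1$. Thus inside $R$ we have $d_R(v_0,v_k)=k$ and every length-$k$ path uses $v_1$.

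To pass from $R$ to $T$ I would confine geodesics to $R$. Deleting $v_1,\dots,v_{k-1}$ leaves a smaller disc triangulation $T^-$ (splitting at cutvertices and discarding bridges exactly as in the proof of Lemma~\ref{geodesic} if some $w_t$ lie on the boundary of $T$), in which $I$ is a boundary arc and all remaining internal vertices still have degree $\ge6$. Applying the lemma inductively to $T^-$ with $I$ as its distinguished arc yields $d_{T^-}(v_0,v_k)=m$, so $I$ is a shortest path there and each of its subarcs is geodesic. Consequently any path in $T$ that leaves $\overline R$ across $I$ at $w_s$ and returns at $w_{s'}$ has length at least $d_{T^-}(w_s,w_{s'})=|s-s'|$, so replacing the excursion by the corresponding subarc of $I$ does not increase length; hence every geodesic may be taken inside $R$. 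A path avoiding $v_1,\dots,v_{k-1}$ altogether lies in $T^-$ and so has length $\ge m\ge k+1>k$. Combining these facts with the polygon analysis gives $d_T(v_0,v_k)=k$ and that every length-$k$ path uses $v_1$.

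The crux — and the step I expect to be most delicate — is verifying that the hypotheses of the lemma genuinely transfer to $T^-$ with the arc $I$, so that the induction is legitimate. Here the block structure is decisive: an apex loses at most its (at most three) removed neighbours, so interior vertices of $I$ have degree $\ge 3$ in $T^-$, and an apex drops to degree $3$ only when it is the apex of a degree-$3$ vertex of $O$ all of whose strip-neighbours are removed. Crucially, a degree-$\ge5$ vertex $v_g$ of $O$ possesses a \emph{middle} apex seeing only $\{v_g\}$, which therefore retains degree $\ge5$ in $T^-$; since these middle apexes interleave the degree-$3$ apexes along $I$ in the same order as on $O$, the ``preceding/intermediate degree-$\ge5$ witness'' conditions are inherited by $I$. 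Moreover the first interior vertex $w_1$ of $I$ sees only $\{v_0,v_1\}$ and loses just $v_1$, so has degree $\ge5\ge4$, matching the special role played by $v_1$. Making this transfer fully rigorous — together with the boundary and cutvertex bookkeeping and the structural Part~1 analogue — is where essentially all the effort lies; the distance bound itself then falls out cleanly from the Lipschitz potential.
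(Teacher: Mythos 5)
Your overall architecture --- the strip $R$ of faces along $v_1,\dots,v_{k-1}$, the inner arc $I$, a potential function inside $R$, and confinement of geodesics to $R$ by an inductive application of the lemma to $T^-$ with $I$ as its distinguished arc --- is coherent and genuinely different from the paper's proof. However, there is a gap at the quantitative core: the function $\phi$ is \emph{not} $1$-Lipschitz on $R$. The bound $b\le a+2$ on block lengths only controls edges between an outer vertex and an apex. For an edge between two consecutive apexes $w,w'$ of $I$, the blocks are $[a,b]$ and $[b,c]$ (consecutive apexes share exactly one outer vertex), so $\phi(w')-\phi(w)=(c-a)/2$, and the hypotheses permit $c-a=3$: take $\deg(v_{a+1})=3$, so that $w$ sees $v_a,v_{a+1},v_{a+2}$, and $\deg(v_{a+2})=4$ with $w'$ seeing $v_{a+2},v_{a+3}$; then $\phi(w)=a+1$ while $\phi(w')=a+\tfrac{5}{2}$. (Concretely, $k=5$ with degree sequence $5,3,4,4$ for $v_1,\dots,v_4$ realises this and satisfies all the hypotheses of the lemma.) Once edges of increment $\tfrac{3}{2}$ are available, the potential argument no longer shows that every $v_0$--$v_k$ path in $R$ has length at least $k$, nor that a length-$k$ path must gain exactly $1$ at each step, so both the distance bound and the ``must begin $v_0v_1$'' conclusion lose their justification at exactly the point you describe as falling out cleanly. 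The conclusions are still true (in the example above the apparent shortcut through $ww'$ is repaid on the step after $w'$), but the plain potential does not see this.

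The remainder of the plan --- the face count $\ge 2k-1$ forcing $m\ge k+1$, the transfer of the witness conditions to $I$ via the middle apexes of degree-$\ge5$ outer vertices, and the replacement of excursions outside $R$ by subarcs of $I$ --- looks sound in outline, though the cutvertex/bridge bookkeeping you defer is not trivial. For comparison, the paper avoids the strip analysis for this lemma entirely: in a minimal counterexample some $v_i$ has degree $3$ (otherwise Lemma~\ref{geodesic} applies directly); deleting $v_i$ and substituting its third neighbour $w_i$ into any shortest path preserves shortest paths, and the degree hypotheses are checked to survive this deletion (here the witness conditions are exactly what prevent $v_{i\pm1}$ from dropping below degree $3$ and keep a degree-$\ge5$ vertex available), so minimality finishes the proof in a few lines. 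To salvage your route you would need either a corrected potential --- the natural candidates all seem to misbehave precisely at apexes of degree-$3$ outer vertices --- or to first eliminate the degree-$3$ outer vertices by the paper's substitution, at which point you have essentially reproduced the paper's argument.
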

\begin{proof}Again, suppose not and take a counterexample with as few total vertices as possible. By Lemma \ref{geodesic}, at least one of these vertices, say $v_i$, has degree $3$. Let $w_i$ be the neighbour of $v_i$ other than $v_{i-1},v_{i+1}$, and remove $v_i$ to give a near-triangulation $T'$. For any shortest $v_0v_k$-path in $T$ which uses $v_i$, replacing $v_i$ by $w_i$ gives another shortest path, and so it suffices to show the conclusion holds for $T'$.

Note that $v_0,\ldots,v_{i-1},w_i,v_{i+1},\ldots v_k$ are consecutive boundary vertices of $T'$, $\deg_{T'}(v_{i-1})=\deg_T(v_{i-1})-1$, $\deg_{T'}(v_{i+1})=\deg_T(v_{i+1})-1$ and $\deg_{T'}(w_i)=\deg_T(w_i)-1$, with all other degrees unchanged. If $w_i$ was an internal vertex of $T$ then $T'$ is still a disc triangulation and $\deg_{T'}(w_i)\geq 5$. It follows that $T'$ satisfies the same boundary degree conditions, and so the result holds by minimality of $T$. If not, then $T'$ consists of two disc triangulations $T'_1$ and $T'_2$ with a common vertex $w_i$. The required boundary conditions are met in $T'_1$ for vertices $v_1,\ldots,v_{i-1}$ and in $T'_2$ for $v_{i+1}\ldots v_{k-1}$, and so $d_{T'_1}(v_0,w_i)=i$ with every shortest path using $v_1$, and $d_{T'_2}(w_i,v_k)=i-k$, from which the result follows.
\end{proof}
We work towards the crucial Lemma \ref{no-equal-triangle}. We first need a definition.

Given a plane near-triangulation $T$, and an internal vertex $x$, let $C$ be the cycle of neighbours of $x$. We regard $C$ as a cyclic list alternating between vertices and edges, of even length $2\deg(x)$. If two elements of this list have positions differing by $\deg(x)$, we say they are \textit{opposite} with respect to $x$. If $y,z$ are two neighbours of $x$ then their positions in the list differ by an even number, so there are two list elements exactly halfway between them which form a pair of opposite elements; we say either such element is a \textit{midpoint} of $y$ and $z$ with respect to $x$. We may similarly define the midpoint of $y$ and $z$ relative to $x$ when $x$ is not internal; in this case there is only one midpoint. 

We define a \textit{meridian} in $T$ as a sequence $e_0,\ldots,e_r$ with the following properties:
\begin{itemize}
\item For each $i$, $e_i$ is either a vertex or an edge.
\item $e_i$ is a boundary vertex or boundary edge if and only if $i\in\{0,r\}$.
\item For $0<i<r$, if $e_i$ is a vertex then $e_{i-1}$ and $e_{i+1}$ are opposite with respect to $e_i$.
\item For $0<i<r$, if $e_i$ is an edge then $e_{i-1}$ and $e_{i+1}$ are the two vertices completing faces with $e_i$ in some order.
\end{itemize}
Note that we may associate a meridian with a simple curve, passing through each vertex and crossing each edge which is an element of the meridian, meeting the boundary of $T$ (only) at each endpoint. Consequently a meridian divides $T$ into two parts.

We say that a meridian \textit{bisects} two incident edges $xy$ and $xz$ if $x$ is an element of the meridian and the adjacent elements are midpoints of $y$ and $z$ relative to $x$. Note that this includes the case where $x$ is a boundary vertex and so there is only one adjacent element (and only one midpoint).
\begin{figure}
\centering
\begin{tikzpicture}[thick]
\draw[dotted] (2.5,1.5) -- (4,2) -- (5.5,1.5);
\draw[dotted] (4,2) -- (4,1);
\draw[ultra thick] (-.5,-1) -- (0,0) -- (1,1) -- (2.5,1.5) -- (4,1) -- (5.5,1.5) -- (7,1) -- (8,0) -- (8.5,-1);
\draw[double] (0,0) -- (2,0) -- (4,1) -- (6,0) -- (8,0);
\draw (1,1) -- (2,0) -- (2.5,1.5);
\draw (7,1) -- (6,0) -- (5.5,1.5);
\draw (0,0) -- (1,-1) -- (2,0) -- (2.5,-1.5) -- (4,-1) -- (5.5,-1.5) -- (6,0) -- (7,-1) -- (8,0);
\draw (2,0) -- (4,-1) -- (6,0);
\draw (4,-1) -- (4,1);
\filldraw[fill=white] (4,2) circle (0.075);
\filldraw (0,0) circle (0.075);
\filldraw (2,0) circle (0.075);
\filldraw (1,1) circle (0.075);
\filldraw (2.5,1.5) circle (0.075);
\filldraw (4,1) circle (0.075);
\filldraw (4,-1) circle (0.075);
\filldraw (5.5,1.5) circle (0.075);
\filldraw (7,1) circle (0.075);
\filldraw (6,0) circle (0.075);
\filldraw (8,0) circle (0.075);
\draw[dashed] (0,0) circle (0.2);
\draw[dashed] (2,0) circle (0.2);
\draw[dashed] (6,0) circle (0.2);
\draw[dashed] (8,0) circle (0.2);
\draw[dashed, rounded corners] (3.8,1.2) rectangle (4.2,-1.2);
\end{tikzpicture}
\caption{Part of a near-triangulation, with a meridian (circled groups) and a path following the meridian (doubled lines). The dotted section at the top shows how the near-triangulation may be expanded to avoid the path meeting the boundary (bold) internally.}\label{fig:line}
\end{figure}
Since each $e_i$ is either a vertex or an edge, we abuse notation by writing $v\in e_i$ to mean that either $e_i$ is the vertex $v$ or is an edge containing $v$. We say that a path $v_0v_1\cdots v_s$ in $T$ \textit{follows} a meridian $e_0,\ldots,e_r$ if $v_0=e_i$ for some $i$, and either for every $j$ we have $v_j\in e_{i+j}$ or for every $j$ we have $v_j\in e_{i-j}$. Note that we require the path to start in a vertex of the meridian. Figure \ref{fig:line} shows a meridian and following path. Our next lemma shows that paths which follow meridians are shortest paths for the class of near-triangulations we consider.
\begin{lemma}\label{following}Let $T$ be a disc triangulation for which every internal vertex has degree at least $6$, let $\ell$ be a meridian in $T$ and let $v_0v_1\cdots v_s$ be a path which follows $\ell$. Set $v'_1$ to be the other vertex (if it exists) for which $v_0v_1'v_2$ follows $\ell$. Then $d_T(v_0,v_s)=s$, and furthermore if $v_0w_1\cdots w_{s-1}v_s$ is any path of length $s$ then $w_1\in \{v_1,v'_1\}$.
\end{lemma}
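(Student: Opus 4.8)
The plan is to reduce the statement to the boundary-geodesic result of Lemma~\ref{geodesic} by cutting $T$ open along the meridian $\ell$. Since a meridian corresponds to a simple curve from boundary to boundary, slicing $T$ along this curve yields two disc triangulations $\hat T_R$ and $\hat T_L$, and the following path $v_0v_1\cdots v_s$ (say) becomes a contiguous portion of the boundary of $\hat T_R$: at a vertex element the curve passes through the vertex, which is duplicated and appears on both boundaries, whereas at an edge element the curve separates the two endpoints, one going to each side. The two choices $v_1,v_1'$ for the first interior step are then exactly the right and left endpoints of $e_{i+1}$ when that element is an edge (and they coincide when it is a vertex). Before cutting I would, following the expansion indicated in Figure~\ref{fig:line}, modify $T$ near any place where the path would meet $\partial T$ internally, so that $\hat T_R$ and $\hat T_L$ are genuine disc triangulations in which $v_0\cdots v_s$ is a simple boundary arc; this expansion can be arranged not to disturb the relevant distances.

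The first substantive step is a degree count showing that $v_1,\dots,v_{s-1}$ all have degree at least $4$ in $\hat T_R$. Cutting leaves the degree of every vertex off $\ell$ unchanged, so internal vertices of $\hat T_R$ still have degree at least $6$. For an interior edge element $e_{i+j}=v_jv_j'$ the vertex $v_j$ loses only the single cut edge, so $\deg_{\hat T_R}(v_j)=\deg_T(v_j)-1\ge 5$ (the expansion rules out the degenerate small-degree boundary cases). For a vertex element of degree $d\ge 6$ the meridian enters and leaves at opposite points of the link, so the right side retains about half of the link together with the two meridian-adjacent neighbours, giving degree at least $4$. Thus $\hat T_R$ satisfies the hypotheses of Lemma~\ref{geodesic} with $v_0,\dots,v_s$ as the distinguished boundary vertices, and I would invoke that lemma to conclude $d_{\hat T_R}(v_0,v_s)=s$ with $v_0v_1\cdots v_s$ the \emph{unique} shortest path; the symmetric statement holds in $\hat T_L$ with unique shortest path $v_0v_1'\cdots v_s$. (Lemma~\ref{geodesic2} is available as a fallback should any borderline degree-$3$ vertex survive the expansion.)

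The real work is transferring these conclusions back to $T$, since cutting can only increase distances, so Lemma~\ref{geodesic} on $\hat T_R$ yields directly only $d_T(v_0,v_s)\le s$. To get the matching lower bound I would argue by induction on $s$, proving the statement more generally for following paths along any contiguous meridian segment (whose endpoints need not lie on $\partial T$). Given an arbitrary $v_0$--$v_s$ walk $Q$ in $T$, I would push it onto one side of $\ell$: each maximal excursion of $Q$ into the open part $T_L$ runs between two points lying on $\ell$ (or on the right following path), and I would replace it by the corresponding meridian segment, which by the inductive hypothesis is itself a shortest path in $T$ and hence no longer than the excursion. The resulting walk lies in $\hat T_R$, so has length at least $d_{\hat T_R}(v_0,v_s)=s$; hence $\lvert Q\rvert\ge s$ and $d_T(v_0,v_s)=s$. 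For the final assertion, given a length-$s$ path $Q$ in $T$ I would confine it to whichever side contains its first neighbour $w_1$, keeping the initial edge fixed; the confined path is a shortest path on that side, so by the uniqueness in Lemma~\ref{geodesic} it must be $v_0v_1\cdots v_s$ or $v_0v_1'\cdots v_s$, forcing $w_1\in\{v_1,v_1'\}$.

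The main obstacle I anticipate is this last transfer: arranging the cut so that $\hat T_R$ and $\hat T_L$ are honestly disc triangulations (the role of the expansion in Figure~\ref{fig:line}), and making the excursion-replacement argument rigorous. In particular I expect the delicate points to be identifying the correct meridian endpoints of each excursion when $Q$ crosses at an edge element rather than passing through a vertex, and verifying that confining a path to one side never lengthens it nor alters its first edge. The degree bookkeeping and the application of Lemma~\ref{geodesic} are, by contrast, routine once the cut has been correctly described.
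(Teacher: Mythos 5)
Your overall strategy---cut $T$ along the curve associated with the meridian and invoke the boundary-geodesic lemmas on the resulting disc triangulation---is the same as the paper's, but two steps fail as stated. First, the degree count is wrong: it is not true that every interior vertex of the following path has degree at least $4$ on one side of the cut. If $v_j$ is a vertex element of $\ell$ whose two adjacent meridian elements are both \emph{edges} of the link of $v_j$, then for $\deg_T(v_j)=6$ each side of the cut retains only three neighbours of $v_j$ (the opposite link edges split the six neighbours three and three). So Lemma~\ref{geodesic} does not apply, and Lemma~\ref{geodesic2} is not an optional ``fallback'' but the lemma you must use, which requires verifying its specific hypotheses: in this configuration $v_{j-1}$ is an endpoint of an edge element, hence loses exactly one neighbour in the cut and has degree at least $5$ on each side; this is precisely the ``degree-$3$ vertex preceded by a degree-$5$ vertex'' condition that Lemma~\ref{geodesic2} exists to exploit. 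Your proposal never makes this check.

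Second, the excursion-replacement induction is circular in exactly the case that matters. If the competing path $Q$ leaves $\gamma$ at $v_0$ and does not meet it again until $v_s$, then all of $Q$ is a single ``excursion'' and the meridian segment you would substitute for it is the whole following path of length $s$, which is not covered by an inductive hypothesis indexed by $s$---it is the statement being proved. The paper's resolution is different: take a counterexample with $s$ minimal; then the competing path cannot visit any intermediate vertex element of $\ell$ nor use any intermediate edge element, since splitting it at such a meeting point would yield a smaller counterexample. Hence it never crosses $\gamma$ and lies entirely on one (closed) side, where, after the expansion of Figure~\ref{fig:line} guaranteeing a genuine disc triangulation, Lemma~\ref{geodesic2} applies directly to both the following path and the competitor. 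In other words, the single-side case you dispatch in one clause (``the resulting walk lies in $\hat T_R$\dots'') is the entire argument, and the excursion machinery is both unnecessary and, as an induction on $s$, unsound.
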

\begin{proof}Suppose not, and take a counterexample $P=v_0v_1\cdots v_s$ following a meridian $\ell$, with $s$ as small as possible. Note that we may assume $v_1,\ldots,v_{r-1}$ are internal vertices of $T$, since we may add vertices to $T$, if necessary, to ensure this without violating the fact that $P$ is a counterexample, and in such a way that all internal vertices have degree at least $6$ (see Figure \ref{fig:line}).

Let $P'$ be an alternative path which demonstrates that $P$ is a counterexample, and let $\gamma$ be a curve associated with $\ell$. Note that minimality ensures that, for $0<i<r$, if $v_i$ is an element of $\ell$ then $P'$ does not visit that vertex, and if $v_iv_i'$ is an element of $\ell$ then $P'$ does not use that edge. In particular, this means that $P'$ does not cross $\gamma$, so we may choose a subtriangulation $T'$ induced by vertices which are either on $\gamma$ or on a particular side of it, such that $P'$ lies entirely in $T'$. We may assume $P$ also lies entirely within $T'$ (if not, replace each vertex which does not with its neighbour which does). We may also assume $T'$ is a disc triangulation, since our previous assumption that $v_1,\ldots,v_{r-1}$ were internal in $T$ ensures that $P$ and $P'$ lie entirely within one block of $T'$.

Fix $i$ with $0<i<r$. Suppose first that $v_i$ is an element of $\ell$. If $v_{i-1}$ and $v_{i+1}$ are both elements of $\ell$ then they are opposite in $T$ with respect to $v_i$, and since $\deg_T(v_i)\geq 6$ we have $\deg_{T'}(v_i)\geq 4$. If exactly one is an element of $\ell$ then $\deg_T(v_i)$ must be odd, so at least $7$, and again $\deg_{T'}(v_i)\geq 4$. Finally, if neither is an element of $\ell$ then we obtain $\deg_{T'}(v_i)\geq 3$.

Alternatively, if $v_i$ is not an element of $\ell$ then it has exactly one neighbour in $T\setminus T'$, and so $\deg_{T'}(v_i)\geq 5$. 

Note that each $v_i$ satisfies $\deg_{T'}(v_i)\geq 3$, and if $\deg_{T'}(v_i)=3$ it follows that $i>1$ and $\deg_{T'}(v_{i-1})\geq 5$. Thus the conditions of Lemma \ref{geodesic2} are met, giving the result.\end{proof}

We can now prove the main result of this section.
\begin{lemma}\label{no-equal-triangle}Let $T$ be a near-triangulation with a simple closed boundary with all internal vertices having degree at least $6$. Let $a,b,c$ be the vertices of a triangle in $T$, and let $x$ be any other vertex of $T$. Then the distances $d_T(x,a),d_T(x,b),d_T(x,c)$ are not all equal.
\end{lemma}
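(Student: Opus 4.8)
The plan is to argue by contradiction, assuming $d_T(x,a)=d_T(x,b)=d_T(x,c)=k$. Recall first that $abc$ must be a face, since (as noted in Section~\ref{sec:intro}) the degree hypothesis forbids any triangle from enclosing an internal vertex. The central object will be the unique meridian $M$ through the edge $bc$: starting from the edge-element $bc$, the two neighbouring elements are forced to be the apexes $a$ and $w$ of the two faces $abc$ and $bcw$ meeting along $bc$ (if $bc$ is a boundary edge, only $a$ occurs and $M$ simply ends there), and continuing straight in each direction determines $M$ completely. Equivalently, $M$ is the meridian that bisects the incident edges $ab$ and $ac$ at $a$, since the midpoint of $b$ and $c$ relative to $a$ is exactly the edge $bc$. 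Let $\gamma$ be the associated curve; it passes through $a$, crosses $bc$, and hence separates $b$ from $c$.

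I would next show that $M$ plays the role of a perpendicular bisector of $b$ and $c$. Let $v$ be any vertex that is an element of $M$, lying on the same side of the edge $bc$ as $a$ (the other side is symmetric, with $w$ in place of $a$). Because $a$ is the meridian-neighbour of the edge-element $bc$, a path that follows $M$ from $v$ towards $bc$ arrives at $a$ and may then be extended by one step into either endpoint of $bc$; both extensions still follow $M$. By Lemma~\ref{following} every such following path is a shortest path, so $d_T(v,b)=d_T(v,c)=d_T(v,a)+1$. In particular every vertex on $M$ is equidistant from $b$ and $c$, and a vertex on the $a$-side is strictly closer to $a$ than to $b$ or $c$ (and, symmetrically, a vertex on the $w$-side is strictly closer to $w$).

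Granting this, the contradiction is quick. Since $d_T(x,b)=d_T(x,c)$, the separation property forces $x$ to be a vertex of $M$ (a vertex can meet $\gamma$ only at a vertex-element). But then the displayed identity gives $d_T(x,a)=d_T(x,b)-1$ if $x$ is on the $a$-side, while the symmetric computation on the $w$-side (following $M$ from $x$ across $bc$ on to $a$, which adds two steps beyond reaching $bc$) gives $d_T(x,a)=d_T(x,b)+1$. Either way $d_T(x,a)\neq d_T(x,b)$, contradicting our assumption. The trivial case $x\in\{a,b,c\}$ is excluded separately, as then the three distances are $0,1,1$.

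The one substantial step is the separation claim: that $d_T(x,b)=d_T(x,c)$ actually forces $x$ onto $M$, rather than merely giving an inequality. To prove it I would take a shortest path $Q$ from $x$ to the endpoint of $bc$ lying on the far side of $\gamma$ from $x$; as $Q$ must cross $\gamma$, it meets $M$ in some element. If it passes through a vertex-element $u$ of $M$, then $d_T(x,\cdot)$ decomposes through $u$ and, using $d_T(u,b)=d_T(u,c)$ from the previous paragraph, yields $d_T(x,b)\le d_T(x,c)$ with equality only if some shortest $x$--$b$ path also passes through $u$. This borderline equality is where the argument is most delicate, and I expect to rule it out using the uniqueness clause of Lemma~\ref{following}: the two following paths leaving $u$ towards $bc$ (ending in $b$ and in $c$) share their initial segment, so Lemma~\ref{following} pins down the first edge of every shortest path from $u$ in that direction, which is incompatible with a shortest $x$--$b$ path doubling back from $u$ towards the $b$-side. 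Handling this case, together with its counterpart where $Q$ instead traverses an edge-element of $M$, is the main obstacle, and is precisely the point at which the rigidity of geodesics supplied by Lemma~\ref{following} is indispensable.
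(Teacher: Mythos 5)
Your setup is sound as far as it goes: the meridian $M$ through the edge $bc$ is indeed the one bisecting $ab$ and $ac$ at $a$, and Lemma~\ref{following} does show that every \emph{vertex element} $v$ of $M$ on the $a$-side satisfies $d_T(v,b)=d_T(v,c)=d_T(v,a)+1$. The fatal problem is the converse ``separation claim'': it is false that $d_T(x,b)=d_T(x,c)$ forces $x$ onto $M$. In a discrete metric the bisector of two adjacent vertices is a thickened region, not a curve. Concretely, take a large ball of the triangular lattice (all internal degrees $6$), let $abc$ be a face, and let $x$ be the vertex obtained by continuing the lattice geodesic through $b$ and $a$ for two further steps beyond $a$. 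Then $d(x,b)=d(x,c)=3$ while $d(x,a)=2$, but $x$ is not an element of $M$: after passing through $a$ the meridian exits through the link-edge opposite to $bc$, whereas the ray $b,a,\dots$ continues to the neighbour of $a$ opposite to $b$, so the two diverge and by the second step $x$ lies strictly to one side of $\gamma$ (it is neither a vertex element nor an endpoint of an edge element of $M$). Your own sketch already betrays the issue: crossing $\gamma$ at a vertex element $u$ costs nothing, since $u$ is equidistant from $b$ and $c$, so the argument only ever yields $d(x,b)\le d(x,c)$, and the borderline equality genuinely occurs. For the $x$ above, shortest paths to $b$ and to $c$ both cross $\gamma$ at $a$ and diverge only at the final step; the uniqueness clause of Lemma~\ref{following} applied at $a$ constrains nothing there, since the following path from $a$ into $bc$ is a single edge. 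So the step you flagged as ``delicate'' is not a technicality to be patched but a false statement.

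To repair the picture you would need the much stronger claim that \emph{every} vertex equidistant from $b$ and $c$ on the $a$-side is strictly closer to $a$, which is essentially the lemma itself and cannot be read off from $M$ alone. The paper sidesteps this by anchoring the meridian at $x$ rather than at the triangle: it fixes shortest paths $P_a,P_b,P_c$ with first vertices $y_a,y_b,y_c$, takes the meridian bisecting $xy_a$ and $xy_b$ when $y_a\neq y_b$, and uses the second crossing of the cycle $P_a\cup P_b\cup ab$ with $\gamma$ together with the uniqueness clause of Lemma~\ref{following} \emph{at $x$} to show that $y_a$ and $y_b$ are adjacent and interchangeable as first steps; iterating over the three pairs produces three shortest paths with a common first vertex, contradicting minimality of the common distance.
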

\begin{proof}
Suppose the result is not true, and take a counterexample where the common distance is as small as possible; note that it cannot be $1$ because each triangle bounds a face. Fix three shortest paths $P_a,P_b,P_c$ to $a,b,c$ respectively, and let $y_a,y_b,y_c$ be the vertices adjacent to $x$ on the paths. By minimality of distance, $y_a,y_b,y_c$ are not all equal.

Suppose that $y_a\neq y_b$. Then consider the meridian $\ell$ which bisects $xy_a$ and $xy_b$, and let $\gamma$ be the associated curve. The cycle $K$ consisting of $P_a$, $P_b$ and $ab$ crosses $\gamma$ at $x$, so must cross it at least once more, either because there is some vertex $z\neq x$ on $K$ which is an element of $\ell$, or because there is some edge $zz'$ of $K$ which is an element of $\ell$. If there are multiple crossings, note that each crossing corresponds to an element of $\ell$, and we fix the crossing for which this element $z$ or $zz'$ is closest to the element $x$. Since the length of the cycle is at least $5$, neither $z$ nor $z'$ is adjacent to $x$. 

Without loss of generality $z$ lies on $P_a$, and so an initial section of $P_a$ is a shortest path to $z$. Also, using Lemma \ref{following}, any $x$-$z$ path which follows $\ell$ is a shortest path. Let $e_i$ be the next element of $\ell$ after $x$ used by such a path. Since $x$ and $z$ are not adjacent, for each $y\in e_i$ there is an $x$-$z$ path following $\ell$ which uses $y$. By Lemma \ref{following}, $y_a\in e_i$. Since $y_a\neq y_b$ and $e_i$ is a midpoint of $y_a$ and $y_b$, we must have $e_i=\{y_a,y_b\}$. Consequently there is an alternative shortest path $P'_a$ with $y'_a=y_b$.

Applying the above argument to each pair of paths, each pair of $y_a,y_b,y_c$ are equal or adjacent. Suppose without loss of generality that $y_a=y_b\sim y_c$. Now apply the above argument to the pairs $(P_a,P_c)$ and $(P_b,P_c)$ respectively. If either of the vertices $z$ obtained lies on $P_c$, there is an alternative shortest path $P'_c$ passing through $y_a$; if not, there are alternative shortest paths $P'_a$ and $P'_b$ both passing through $y_c$. In either case this contradicts minimality of the supposed counterexample.
\end{proof}

\subsection{Nice configurations}\label{sec:nice}

We next define the main configurations we use in the proof; we shall show that such a configuration can be identified from the pairwise distances of its boundary vertices, and allows a reduction to a smaller near-triangulation.

For $k\geq 4$, a \textit{nice configuration} of length $k$ in a near-triangulation $T$ consists of $k+1$ consecutive boundary vertices, which we label $v_0,\ldots,v_k$, and $k-2$ internal vertices $w_1\cdots w_{k-2}$, such that the triangles $v_0w_1v_1$, $v_{k-1}w_{k-2}v_{k}$, $v_iw_iv_{i+1}$ for each $1\leq i\leq k-2$, and $w_iw_{i+1}v_{i+1}$ for each $1\leq i\leq k-3$ are all faces of $T$. See Figure \ref{fig:nice} for an example.
\begin{figure}
\centering
\begin{tikzpicture}
\filldraw (0,0) circle (0.075) node [anchor=north] {$v_0$};
\filldraw (1,0) circle (0.075) node [anchor=north] {$v_1$};
\filldraw (2,0) circle (0.075) node [anchor=north] {$\cdots$};
\filldraw (3,0) circle (0.075);
\filldraw (4,0) circle (0.075);
\filldraw (5,0) circle (0.075);
\filldraw (6,0) circle (0.075);
\filldraw (7,0) circle (0.075) node [anchor=north] {$v_k$};
\draw[ultra thick] (0,0) -- (7,0);
\draw[ultra thick, dashed] (-1,0) -- (0,0);
\draw[ultra thick, dashed] (7,0) -- (8,0);
\draw (0,0) -- (1,1.5) -- (6,1.5) -- (7,0);
\draw (1,0) -- (1,1.5) -- (2,0) -- (2.5,1.5) -- (3,0) -- (3.5,1.5) -- (4,0) -- (4.5,1.5) -- (5,0) -- (6,1.5) -- (6,0);
\filldraw [fill=white] (1,1.5) circle (0.075) node [anchor=south] {$w_1$};
\filldraw [fill=white] (2.5,1.5) circle (0.075) node [anchor=south] {$\cdots$};
\filldraw [fill=white] (3.5,1.5) circle (0.075);
\filldraw [fill=white] (4.5,1.5) circle (0.075);
\filldraw [fill=white] (6,1.5) circle (0.075) node [anchor=south] {$w_{k-2}$};
\end{tikzpicture}
\caption{A nice configuration with $k\geq 4$. Boundary edges and vertices are precisely those represented by bold lines and filled circles.}\label{fig:nice}
\end{figure}

We next give a bound on the maximum distance between boundary vertices, which is slightly stronger than the trivial bound (using the path along the boundary) of $\floor{n/2}$.
\begin{lemma}\label{shortcut}Let $T$ be a chordless disc triangulation of boundary length $n>3$ in which every internal vertex has degree at least $6$. Then the maximum distance in $T$ between external vertices is at most $\floor{n/2}-1$.\end{lemma}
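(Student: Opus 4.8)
The plan is to fix two boundary vertices $a,b$ realising the maximum distance $D:=d(a,b)$ and to show $D\le\floor{n/2}-1$. The two boundary arcs joining $a$ to $b$ are paths between them, so each has length at least $D$; writing their lengths as $p\le q$ we have $p+q=n$ and $p\ge D$. Hence $D\le p\le\floor{n/2}$ for free, and the content of the lemma is to exclude equality throughout. Concretely it suffices to rule out the case $D=p=\floor{n/2}$: for every other pair we have $p\le\floor{n/2}-1$ and so $D\le p\le\floor{n/2}-1$ automatically. In the remaining case the shorter arc, having length $p=D$, is itself a shortest path.

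First I would isolate the basic shortcut mechanism. If an arc contains two consecutive interior vertices $u_i,u_{i+1}$ of degree $3$, then the face on the inner side of the edge $u_iu_{i+1}$ forces their two inner neighbours to coincide in a single vertex $x$ adjacent to all four consecutive boundary vertices $u_{i-1},u_i,u_{i+1},u_{i+2}$; replacing $u_{i-1}u_iu_{i+1}u_{i+2}$ by $u_{i-1}xu_{i+2}$ shortens the arc by one. Thus a shortest-path arc has no two consecutive degree-$3$ interior vertices, so in our remaining case the shorter arc has this property (this is exactly the borderline permitted by Lemma~\ref{geodesic2}). The tension is now with a global count: minimum internal degree $6$ gives the combinatorial Gauss--Bonnet identity $\sum_{v\in\partial T}(4-\deg v)+\sum_{\text{internal }v}(6-\deg v)=6$, whence $\sum_{v\in\partial T}(4-\deg v)\ge 6$ (equivalently \cite[Lemma 2.2]{ABH}); since chordlessness forbids degree-$2$ boundary vertices, this forces a substantial surplus of degree-$3$ boundary vertices. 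Reconciling this surplus with the absence of consecutive degree-$3$ vertices on the shorter arc is the heart of the matter.

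To finish the extremal case I would bring in the meridian machinery of Section~\ref{sec:meridian}. Following the meridian that bisects the angle at $a$ (Lemma~\ref{following}) produces a shortest path from $a$ that runs through the interior, strictly between the two arcs, and lands on the antipodal part of the boundary; in the perfectly balanced configuration $n=2D$ this furnishes a \emph{third}, interior geodesic from $a$ to $b$ alongside the two boundary arcs. Examining the distance-from-$a$ values along the fan of faces where these geodesics meet, Lemma~\ref{no-equal-triangle} forbids any face whose three vertices are equidistant from $a$; combined with the degree-$3$ surplus this excludes the balanced bigon, and a parallel argument handles $n=2D+1$. Hence $n\ge 2D+2$, i.e. $D\le\floor{n/2}-1$.

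The main obstacle is precisely this balanced case $n=2D$, a geodesic bigon. In the continuous non-positively-curved setting it is excluded at once by Gauss--Bonnet, but discretely a shortest boundary arc may still turn inward by $\pi/3$ at isolated degree-$3$ vertices (the corner path through the shared inner neighbour has equal length), so no purely local turning estimate suffices. The resolution must couple the global curvature surplus with the rigidity of shortest paths coming from Lemmas~\ref{following} and~\ref{no-equal-triangle}, and making the bookkeeping of the two regions cut out by the interior geodesic close up exactly is the delicate point I expect to have to work hardest on.
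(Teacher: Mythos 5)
There is a genuine gap. Your reduction to the extremal case is fine but contentless: $D\le p\le\floor{n/2}$ is immediate from the two boundary arcs, so the entire substance of the lemma is the exclusion of $D=p=\floor{n/2}$, and that exclusion is never actually carried out. Your one concrete local step (a geodesic arc cannot contain two consecutive degree-$3$ interior vertices, since their common inner neighbour gives a shortcut) is correct but, as you yourself observe, does not combine with the boundary-curvature surplus $\sum_{v\in\partial T}(4-\deg v)\ge 6$ to yield a contradiction: when $n=2D$ both arcs are geodesics with roughly $D/2$ degree-$3$ vertices each, which is perfectly compatible with the surplus for all large $D$. The remaining plan is not an argument. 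In particular, the claim that the meridian bisecting the angle at $a$ ``lands on the antipodal part of the boundary'' and so ``furnishes a third, interior geodesic from $a$ to $b$'' is unjustified -- Lemma~\ref{following} tells you a path following a meridian is a geodesic to wherever the meridian happens to exit, and nothing forces that exit point to be $b$ or even near it -- and the subsequent appeal to Lemma~\ref{no-equal-triangle} ``combined with the degree-$3$ surplus'' is not accompanied by any bookkeeping that could be checked. You flag this yourself as the point you expect to work hardest on; as written, the lemma is not proved.

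For contrast, the paper avoids the extremal-case analysis entirely by routing the connecting path through the second layer. Deleting the boundary vertices leaves a triangulation $T'$ whose boundary walk, by \cite[Lemma 2.2]{ABH}, has length at most $n-6$ counted with multiplicity; writing $a$ for the edges of that boundary graph $H$ lying in cycles and $b$ for its bridges, one gets $a+2b\le n-6$, a shortest path in $H$ has length at most $a/2+b\le n/2-3$, and prefixing and suffixing one edge from $x$ and $y$ into $H$ gives a path of length at most $n/2-1$ between any two boundary vertices. This gives the bound uniformly, with no case distinction and no use of the meridian machinery. If you want to salvage your approach, the missing piece is a genuine argument that a geodesic bigon of perimeter $n$ (or the near-bigon when $n$ is odd) cannot bound a chordless triangulation with internal degrees at least $6$; the isoperimetric inequality of \cite{ABH} is the natural tool for exactly that, at which point you have essentially rederived the paper's proof.
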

\begin{proof}Removing the external vertices leaves a triangulated graph $T'$. Since $T$ contains no chords, $T'$ must be connected and every external vertex of $T$ is adjacent to a vertex on the boundary of $T'$. By \cite[Lemma 2.2]{ABH}, the boundary length of $T'$ is at most $n-6$; note that this counts edges with multiplicity, \ie any edge of $T'$ meeting the external face on both sides is counted twice. The boundary of $T'$ forms a connected graph $H$, and every block of $H$ is either a cycle or a bridge. Writing $a$ for the number of edges of $H$ in cycles and $b$ for the number of bridges of $H$, the bound mentioned is therefore that $a+2b\leq n-6$. Any two vertices $x,y$ on the boundary of $T$ may be connected by taking a shortest path in $H$ between vertices $x'$ adjacent to $x$ and $y'$ adjacent to $y$. Any shortest path in $H$ uses at most half the edges of any cycle, and so has length at most $a/2+b\leq n/2-3$, implying that the path from $x$ to $y$ has length at most $n/2-1$.
\end{proof}

\begin{lemma}\label{nice-distances}Suppose $T$ is a chordless disc triangulation with all internal vertices having degree at least $6$, and $v_0,\ldots,v_k$ are consecutive boundary vertices. Then the following are equivalent.
\begin{itemize}\item$v_0,\ldots,v_k$ are the boundary vertices of a nice configuration.
\item $d_T(v_0,v_k)=k-1$ and $d_T(v_i,v_j)=j-i$ for all other pairs $0\leq i<j\leq k$.
\end{itemize}
\end{lemma}
\begin{proof}First we show the forward implication. Consider the near-triangulation $T'$ obtained by removing $v_1,\ldots,v_{k-1}$. Applying Lemma \ref{geodesic} to $T'$ and the path $v_0w_1\cdots w_{k-2}v_k$, this is a shortest path in $T'$. It follows that for any $0\leq i<j\leq k$, there is a shortest path in $T$ from $v_i$ to $v_j$ which lies entirely within the nice configuration, since any section which goes outside must start and end at vertices on the path $v_0w_1\cdots w_{k-2}v_k$ and lie in $T'$, so can be replaced by a section of that path without increasing the length. The claim follows since these are the distances on the subgraph formed by the nice configuration.

Next we show the reverse implication. The distance conditions ensure that there is a path of length $k-1$ from $v_0$ to $v_k$ which is disjoint from $v_1,\ldots,v_{k-1}$. Write $w_1,\ldots w_{k-2}$ for the vertices on the path other than $v_0$ and $v_k$. We will show that $w_iv_i$ and $w_iv_{i+1}$ are edges of $T$ for each $i$; since $T$ is chordless this means each $w_i$ must be an internal vertex of $T$ and so this is a nice configuration.

Indeed, the cycle $C:=v_0\cdots v_kw_{k-2}\cdots w_1$ can have no chord other than these, since if $j<i$ and $w_iv_j$ is an edge then the path $v_jw_i\cdots w_{k-2}v_k$ is too short unless $(j,i)=(0,1)$, whereas if $j>i+1$ and $w_iv_j$ is an edge then the path $v_0w_1\cdots w_iv_j$ is too short unless $(j,i)=(k-2,k)$. Suppose not all of the required chords of $C$ are edges of $T$. Then there is some chordless cycle $C'$ of length at least $4$ in $T$ whose vertices are vertices of $C$, and each edge of $C'$ is either an edge of $C$ or one of the required chords. We consider the subtriangulation $T'$ with boundary $C'$. If $C'=C$ then Lemma \ref{shortcut} gives that $d_{T'}(v_1,v_k)\leq k-2$ a contradiction. If it contains $v_0,\ldots, v_j$ but not $v_{j+1}$ for $j<k$, then it contains at most $j$ other vertices and Lemma \ref{shortcut} gives $d_{T'}(v_0,v_j)\leq j-1$, a contradiction, and the case $v_k\in C'$ is similar. Finally, if $C'$ contains $v_i,\ldots, v_j$ but not $v_{i-1}$ or $v_{j+1}$ for some $0<i<j<k$ then it contains $w_i,\ldots w_{j-1}$ and possibly also $w_{i-1}$ and/or $w_j$. Lemma \ref{shortcut} gives a contradiction to the distance between $v_i$ and $v_j$ if $w_{i-1},w_j\not\in C'$, whereas if $w_{i-1}\in C'$ it gives a contradiction to the distance between $v_{i-1}$ and $v_j$ via $w_i$ (and the remaining case is similar).
\end{proof}
\begin{lemma}\label{nice-reduce}Let $T$ be a disc triangulation with boundary $v_0,\ldots,v_{n-1}$ and all internal vertices having degree at least $6$, and suppose that $v_0,\ldots,v_k$ and $w_1,\ldots,w_{k-2}$ form a nice configuration. Let $T'$ be the near-triangulation obtained by removing $v_1,\ldots,v_{k-1}$. Then if all pairwise distances between boundary vertices of $T$ are known, we may deduce all pairwise distances between boundary vertices of $T'$.
\end{lemma}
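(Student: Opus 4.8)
The plan is to identify the boundary of $T'$ explicitly and split the required distances into three types. Removing $v_1,\ldots,v_{k-1}$ leaves the new boundary cycle $v_0,w_1,\ldots,w_{k-2},v_k,v_{k+1},\ldots,v_{n-1}$; I will call $v_0,v_k,v_{k+1},\ldots,v_{n-1}$ the \emph{old} boundary vertices and $w_1,\ldots,w_{k-2}$ the \emph{new} ones. Since the only neighbours of each $v_i$ ($1\le i\le k-1$) are among $\{v_{i-1},v_{i+1},w_{i-1},w_i\}$, no internal vertex of $T'$ is adjacent to a removed vertex, so $T'$ is a disc triangulation with all internal degrees still at least $6$, and each $w_i$ loses exactly its neighbours $v_i,v_{i+1}$, giving $\deg_{T'}(w_i)=\deg_T(w_i)-2\ge4$. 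I must express $d_{T'}$ for each of the three kinds of pair — new/new, old/old, old/new — in terms of the known values $d_T(v_a,v_b)$.

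The new/new and old/old cases are handled by the top path $\pi:=v_0w_1\cdots w_{k-2}v_k$. Applying Lemma \ref{geodesic} to $T'$ (the intermediate $w$'s having degree at least $4$) shows $\pi$ is the unique shortest $v_0$--$v_k$ path in $T'$, of length $k-1$; in particular $d_{T'}(w_i,w_j)=\abs{i-j}$ and $d_{T'}(v_0,w_i)=i$, $d_{T'}(v_k,w_i)=k-1-i$, all known. The path $\pi$ also separates $T$, and the only vertices strictly on its strip side are $v_1,\ldots,v_{k-1}$; since the strip has no interior vertices, any maximal excursion of a shortest $T$-path into $\{v_1,\ldots,v_{k-1}\}$ runs monotonically along the bottom between two vertices of $\pi$ and can be replaced by the corresponding sub-path of $\pi$ without increasing length. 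Hence every relevant $T$-distance is realised by a path inside $T'$, so $d_{T'}(u,u')=d_T(u,u')$ for old $u,u'$ and, importantly, $d_{T'}(u,w_i)=d_T(u,w_i)$ for every old $u$.

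The crux is the old/new case: determining $d_T(u,w_i)$ for an old vertex $u$, where $w_i$ is not a boundary vertex of $T$ and so its distances are not given directly. Here I use the face $v_iw_iv_{i+1}$: writing $f(j)=d_T(u,v_j)$, adjacency of $w_i$ to $v_i$ and $v_{i+1}$ gives the lower bound $d_T(u,w_i)\ge\max\bigl(f(i),f(i+1)\bigr)-1$. The hard part is the matching upper bound $d_T(u,w_i)\le\min\bigl(f(i),f(i+1)\bigr)$: I will argue from planarity that since $u$ lies on the far side of $\pi$ from $v_i$, a shortest $u$--$v_i$ path meets $\pi$ at some vertex $w_m$ closest to $v_i$, whence $d_T(u,v_i)=d_T(u,w_m)+d_T(w_m,v_i)\ge d_T(u,w_m)+\abs{m-i}\ge d_T(u,w_i)$ using $d_T(w_m,w_i)\le\abs{m-i}$, and symmetrically for $v_{i+1}$. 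Combining the two bounds, $d_T(u,w_i)$ lies in a window of width one: if $f(i)\ne f(i+1)$ it equals $\min(f(i),f(i+1))$ immediately, while if $f(i)=f(i+1)$ it must be $f(i)-1$, because $f(i)=f(i+1)=d_T(u,w_i)$ is exactly the equidistant triangle forbidden by Lemma \ref{no-equal-triangle}. In every case $d_{T'}(u,w_i)=d_T(u,w_i)$ is pinned down by the known boundary distances $d_T(u,v_i)$ and $d_T(u,v_{i+1})$, completing the deduction. I expect the planar separation step securing the upper bound to be the main obstacle, with the appeal to Lemma \ref{no-equal-triangle} the decisive ingredient that removes the final ambiguity.
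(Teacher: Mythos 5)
Your proof is correct, and although it shares the paper's overall skeleton --- reduce the problem to determining $d_T(u,w_i)$ for each surviving boundary vertex $u$ from the known values $d_T(u,v_i)$ and $d_T(u,v_{i+1})$, and invoke Lemma \ref{no-equal-triangle} on the face $v_iw_iv_{i+1}$ when those two values coincide --- your decisive step is genuinely different. The paper never proves an upper bound on $d_T(u,w_i)$ directly: in the case $d_T(u,v_i)\neq d_T(u,v_{i+1})$ it assumes $d_T(u,w_i)$ takes the larger value and propagates that assumption rightward along the strip, using the property that every vertex at positive distance has a neighbour at distance one less, reaching a contradiction only at $v_k$; the equal case then needs further case analysis on $d_T(u,v_{i-1})$ (splitting $i=1$ from $i>1$) to exclude the larger value. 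You instead establish the two-sided window $\max\bigl(f(i),f(i+1)\bigr)-1\le d_T(u,w_i)\le\min\bigl(f(i),f(i+1)\bigr)$ once and for all: the lower bound from adjacency, and the upper bound from the fact that $\{v_0,w_1,\ldots,w_{k-2},v_k\}$ separates $\{v_1,\ldots,v_{k-1}\}$ from $u$, so a shortest $u$--$v_i$ path can be diverted to $w_i$ at its last separator vertex at no extra cost. This is cleaner, uniform in $i$, and isolates Lemma \ref{no-equal-triangle} as the single remaining ingredient. One small imprecision: the last separator vertex on the path need not be a $w_m$ --- it may be $v_0$ or $v_k$ --- but the same computation goes through there (the path then enters the bottom row at $v_1$ or $v_{k-1}$, and the detour via $v_0w_1\cdots w_i$, resp.\ $v_kw_{k-2}\cdots w_i$, is no longer), so this is cosmetic rather than a gap.
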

\begin{proof}By Lemma \ref{nice-distances}, $v_0w_1\cdots w_{k-2}v_k$ is a shortest path, and it follows that $d_{T'}(x,y)=d_T(x,y)$ for all $x,y\in T'$. Thus it is sufficient to show that for any $i>k$ we may deduce $(d_T(v_i,w_j))_{1\leq j\leq k-2}$ from $(d_T(v_i,v_j))_{1\leq j\leq k-1}$. We use the following three properties satisfied by the function $f(y)=d_T(x,y)$ for any fixed $x$:
\begin{enumerate}[(i)]
\item if $y$ and $z$ are adjacent then $\abs{f(y)-f(z)}\leq 1$;
\item if $f(y)>0$ then $y$ has a neighbour $z$ with $f(z)=f(y)-1$;
\item $f$ is not constant on any triangle, by Lemma \ref{no-equal-triangle}.
\end{enumerate}
Fix $x=v_i$ for some $i>k$, and let $1\leq j\leq k-2$. 

If $f(v_j)\neq f(v_{j+1})$, then they differ by $1$ and $f(w_j)$ takes one of those two values, using (i). We claim it must be the lower of the two. Suppose not; by symmetry we may assume $f(w_j)=f(v_j)=q$ and $f(v_{j+1})=q-1$. Now applying (ii) to $v_{j+1}$, either $f(w_{j+1})=q-2$ or $f(v_{j+2})=q-2$, but the former is impossible by (i). So $f(w_{j+1})=f(v_{j+1})=q-1$ and $f(v_{j+2})=q-2$. Proceeding in this manner, we see that $f(w_{k-2})=q+2+j-k$ and $f(v_{k-1})=q+1+j-k$. Now applying (ii) to $v_{k-1}$, we must have $f(v_k)=q+j-k$ but this contradicts (i).

If $f(v_j)=f(v_{j+1})=q$, then $f(w_j)=q\pm1$ by (i) and (iii). If $f(v_{j-1})\leq q-1$, then either $j>1$ and the previous paragraph gives $f(w_{j-1})=q-1$, or $j=1$ and $v_0$ is adjacent to $w_1$. If $f(v_{j-1})\geq q$ then by (ii) either we have $f(w_j)=q-1$ or we have $j>1$ and $f(w_{j-1})=q-1$. In all cases either $w_j$ or one of its neighbours has value $q-1$, so $f(w_j)\neq q+1$ by (i).

Thus in every case we may determine $f(w_j)$ from the values $f(v_j)$ and $f(v_{j-1})$.
\end{proof}

\subsection{Additional reductions}\label{sec:misc}
Next we give some additional small cases which yield reductions. Throughout this section we assume $T$ is a chordless disc triangulation with all internal vertices having degree at least $6$; recall that this ensures that no cycle of length less than $6$ encloses any vertices.
\begin{lemma}\label{five}Let $S$ be a set of at least five boundary vertices of $T$. Then the vertices in $S$ have a common neighbour if and only if all pairwise distances between vertices in $S$ are at most $2$.\end{lemma}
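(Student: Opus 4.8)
The forward direction is immediate: a common neighbour $u$ of $S$ places any two vertices of $S$ at distance at most $2$ via $u$, and distinct vertices are at distance at least $1$.

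For the converse, suppose all pairwise distances within $S$ are at most $2$; the plan is to exhibit a single internal vertex adjacent to all of $S$. First I would fix a candidate. Let $A$ be the shortest boundary arc containing $S$, with endpoints $p,q\in S$. Since $\abs{S}\ge5$, the arc $A$ carries at least five vertices, so $p$ and $q$ lie more than two apart on the boundary; as they are non-adjacent and at distance $2$, their common neighbour cannot be a boundary vertex (that would have to be consecutive with both, by chordlessness), so it is an internal vertex $u$. This $u$ is the candidate, and the goal is to show $S\subseteq N(u)$.

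Next I would dissect the fan of $u$. Writing $p=x_0,x_1,\dots,x_t=q$ for the neighbours of $u$ on $A$ in order, the edges $ux_j$ cut the region between $A$ and the path $p\,u\,q$ into \emph{pockets}, the $j$th being bounded by $ux_j$, $ux_{j+1}$ and the sub-arc of $A$ from $x_j$ to $x_{j+1}$. Each pocket is a chordless disc triangulation (using chordlessness of $T$ and that $x_j,x_{j+1}$ are consecutive neighbours of $u$). A pocket whose sub-arc has length $\ell$ has boundary length $\ell+2$; since it must be triangulated, cannot use chords, and by the standing fact that no cycle of length below $6$ encloses a vertex cannot contain interior vertices when $\ell+2<6$, the cases $\ell\in\{2,3\}$ are impossible, so every pocket has sub-arc length either $1$ (a single face) or at least $4$. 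In the borderline case $\ell=2$ the would-be interior vertex $s$ spans a chordless $4$-cycle with $u$, which can only be triangulated by the edge $us$ (the alternative diagonal is a forbidden chord), so $s$ is in fact adjacent to $u$; thus no vertex of $S$ can sit strictly inside a length-$2$ pocket.

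It remains to rule out a vertex $s\in S$ strictly inside a pocket, necessarily a large ($\ell\ge4$) one, and this is where I expect the main difficulty to lie. The first ingredient is an \emph{exit} observation: every neighbour of $s$ lies in the closed pocket, and any vertex outside it is reachable from $s$ only through $u,x_j$ or $x_{j+1}$; since $s\not\sim u$, any distance-$2$ path from $s$ to an outside vertex must have $x_j$ or $x_{j+1}$ as midpoint, so each vertex of $S$ outside the pocket is a boundary-neighbour of $x_j$ or $x_{j+1}$. Hence at most two vertices of $S$ lie outside the closed pocket, confining $S$ to a short arc. The hard part is to turn this confinement into a contradiction, and it is exactly here that $\abs{S}\ge5$ is essential: at least five vertices pairwise within distance $2$ are forced into a single long pocket that also hosts an interior vertex. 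To contradict this I would bound the pocket distances from both sides: Lemma~\ref{geodesic} applied along the sub-arc (or Lemma~\ref{geodesic2} to accommodate degree-$3$ boundary vertices) forces $d(x_j,x_{j+1})$ to equal the sub-arc length, clashing with the value $2$ supplied by the path through $u$ once the sub-arc is long, while Lemma~\ref{shortcut} limits how far apart the vertices of $S$ inside the pocket can lie. Reconciling these two estimates against the demand that five vertices be mutually within distance $2$ is the crux; I expect the delicate bookkeeping to concern the degenerate length-$1$ pockets and the verification that distances measured inside a pocket agree with those in $T$, after which no vertex of $S$ lies strictly inside a pocket and $u$ is the required common neighbour.
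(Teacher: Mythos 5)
There are two genuine gaps here, one small and one fatal. The small one is in fixing the candidate $u$: you take $p,q$ to be the endpoints of the shortest arc $A$ containing $S$ and argue their common neighbour must be internal because a boundary common neighbour ``would have to be consecutive with both''. But nothing rules that out for \emph{this} pair: if the complementary arc has length exactly $2$ (e.g.\ $n=6$ and $S$ omits one boundary vertex), the single vertex of the complementary arc is consecutive with both $p$ and $q$ and may be their only common neighbour, so your candidate need not exist where you look for it. The fix is to choose a pair of vertices of $S$ that are separated by other vertices of $S$ on \emph{both} sides of the boundary cycle (the paper uses $w_1$ and $w_4$ from the cyclic labelling $w_1,\ldots,w_r$), which forces any common neighbour to be internal.

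The fatal gap is that you never carry out the step you yourself identify as the crux: ruling out a vertex $s\in S$ strictly inside a long pocket. The contradiction you sketch does not work as stated. Lemma~\ref{geodesic} applied to the pocket gives $d_{P}(x_j,x_{j+1})=\ell$ only for distances \emph{within the pocket subgraph}; the value $2$ comes from the path $x_j u x_{j+1}$, and $u$ lies outside the pocket, so there is no clash --- these two facts coexist perfectly well in $T$. Turning the confinement of $S$ to a single pocket into a contradiction would require a genuinely new argument about distances inside the pocket, which you have not supplied, so the proof is incomplete. For contrast, the paper avoids all of this pocket analysis: having found the internal vertex $x$ adjacent to $w_1$ and $w_4$, it observes that for each remaining $w_i$ there is a ``crossing'' pair (such as $w_2$ with $w_5,\ldots,w_{r-1}$, and $w_3$ with $w_r$) whose distance-$2$ path must, by planarity, pass through $w_1$, $x$ or $w_4$; chordlessness kills the first and last options, so the midpoint is $x$ and every vertex of $S$ is adjacent to $x$ (with a short separate case analysis for $r=5$). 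That one-line planarity observation is what your argument is missing.
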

\begin{proof}We show the ``if'' statement; ``only if'' is immediate. Write $S=\{w_1,\ldots,w_r\}$, labelled anticlockwise around the boundary cycle. First suppose $r\geq 6$. There must be some vertex $x$ not on the boundary which is adjacent to $w_1$ and $w_4$. There must also be paths of length $2$ between $w_2$ and each of $w_5,\ldots,w_{r-1}$, and between $w_3$ and $w_r$; since there are no chords these must all go via $x$.

If $r=5$ then without loss of generality vertices $w_1$ and $w_5$ are not adjacent (since we have $n\geq 6$). Again there must be a vertex $x$ adjacent to $w_1$ and $w_4$, and this must also be adjacent to $w_2$ and $w_5$. If $w_2,w_3,w_4$ are consecutive boundary vertices then $w_2w_3w_4x$ is a $4$-cycle and must be triangulated by the edge $xw_3$. If not, assume without loss of generality $w_3,w_4$ are not adjacent. Then the path of length $2$ from $w_3$ to $w_5$ must use $x$. In either case $x$ is adjacent to all five vertices.
\end{proof}
\begin{lemma}\label{four}Let $v_1,\ldots,v_4$ be four consecutive boundary vertices of $T$. Then $v_1,\ldots,v_4$ have a common neighbour if and only if $d(v_1,v_4)=2$.\end{lemma}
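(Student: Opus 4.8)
The ``only if'' direction is immediate, so the plan is to concentrate on the converse. If $v_1,\ldots,v_4$ have a common neighbour $x$ then $v_1xv_4$ has length $2$, and since $v_1,v_4$ are non-consecutive boundary vertices of a chordless triangulation we also have $d(v_1,v_4)\geq 2$, giving $d(v_1,v_4)=2$.

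For the converse I would begin from $d(v_1,v_4)=2$ and fix a common neighbour $x$ of $v_1$ and $v_4$. The first task is to show that $x$ is internal. Any boundary vertex adjacent to $v_1$ must be one of its two cyclic boundary neighbours, as otherwise the edge would be a chord; the same holds for $v_4$. These two pairs are disjoint provided the boundary has at least six vertices, and this is the case here, since a chordless disc triangulation in which no short cycle encloses a vertex cannot have boundary length $4$ or $5$ (such a boundary cycle could be triangulated neither by an enclosed vertex nor by a chord). Hence no boundary vertex can be adjacent to both $v_1$ and $v_4$, and $x$ is internal.

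The key step is to examine the closed walk $v_1v_2v_3v_4x$. As $x$ is internal and distinct from the four boundary vertices, this is a genuine $5$-cycle, and its boundary edges $v_1v_2,v_2v_3,v_3v_4$ together with the interior arc $v_1xv_4$ enclose a disc region inside $T$. Because the cycle has length $5<6$, the standing assumption that no cycle of length less than $6$ encloses a vertex guarantees this region contains no further vertices. It is therefore a triangulated pentagon whose internal edges are diagonals of $v_1v_2v_3v_4x$.

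The crux, which I expect to be the only real obstacle, is a short count of which diagonals can appear. Of the five diagonals of this pentagon, three of them, namely $v_1v_3$, $v_2v_4$ and $v_1v_4$, join non-consecutive boundary vertices and so are forbidden chords of $T$. This leaves only $xv_2$ and $xv_3$ as permissible diagonals, and a pentagon requires exactly two non-crossing diagonals to be triangulated; hence both $xv_2$ and $xv_3$ must be edges of $T$. Consequently $x$ is adjacent to each of $v_1,v_2,v_3,v_4$, which completes the converse. The delicate part is purely the preliminary bookkeeping, confirming that $x$ is internal and that the $5$-cycle bounds an empty pentagon; once the no-enclosed-vertex property is invoked, the forbidden-chord count forces the conclusion with no remaining freedom.
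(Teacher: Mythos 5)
Your proof is correct and follows essentially the same route as the paper's: take the midpoint $x$ of the length-$2$ path, observe that the $5$-cycle $v_1v_2v_3v_4x$ bounds a region with no interior vertices, and note that chordlessness forbids all diagonals except $xv_2$ and $xv_3$, which must therefore both be present. The extra bookkeeping you do (showing $x$ is internal and that $n\geq 6$) is sound but not needed beyond establishing $x\neq v_2,v_3$, which the paper gets directly from chordlessness.
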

\begin{proof}Again, it is sufficient to prove the ``if'' statement. Indeed, the path of length $2$ from $v_1$ to $v_4$ uses some vertex $x\neq v_2,v_3$, and now the $5$-cycle $v_1v_2v_3v_4x$ must be triangulated without additional vertices or chords of the boundary cycle, so by the edges $v_2x$ and $v_3x$.
\end{proof}
\begin{lemma}\label{reduce1}Suppose there are five boundary vertices, $w_1,\ldots,w_5$ labelled anticlockwise, with a common internal neighbour $x$. Let $T_i$ be the subtriangulation with boundary consisting of $x$ and the section of the boundary from $w_i$ to $w_{i+1}$ (taking subscripts modulo $5$). Assume the boundary distances of $T$ are given. Then for each $i$, and each pair $y,z$ of boundary vertices of $T_i$ we may deduce $d_{T_i}(y,z)$.
\end{lemma}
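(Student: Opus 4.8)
The plan is to use the five edges $xw_1,\dots,xw_5$ to cut $T$ into the regions $T_1,\dots,T_5$ and to exploit the observation that the only vertices lying in more than one region---equivalently, the only vertices of a given $T_i$ having a neighbour outside $T_i$---are the six \emph{gateways} $x,w_1,\dots,w_5$. Consequently any path of $T$ joining two different regions passes through a gateway, and any excursion of a path out of a single region $T_i$ and back into it starts and ends at vertices of $\{x,w_i,w_{i+1}\}$. I would first dispose of the pairs $y,z$ not involving $x$: here both are arc vertices, hence genuine boundary vertices of $T$, and I claim $d_{T_i}(y,z)=d_T(y,z)$, which is known. Taking a shortest $T$-path and examining each maximal excursion outside $T_i$, its two endpoints are distinct gateways in $\{x,w_i,w_{i+1}\}$ and the excursion has length equal to their $T$-distance. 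Each such excursion may be replaced by a path inside $T_i$ of the same length: an $x$--$w_i$ or $x$--$w_{i+1}$ excursion by the corresponding edge, and a $w_i$--$w_{i+1}$ excursion by the edge $w_iw_{i+1}$ when it exists (chordlessness then forces $w_i,w_{i+1}$ to be consecutive, so this edge lies in $T_i$) or otherwise by the length-$2$ path $w_ixw_{i+1}$, since $w_i,w_{i+1}$ are then non-adjacent with common neighbour $x$. This rerouting gives $d_{T_i}(y,z)\le d_T(y,z)$, and the reverse inequality is trivial.

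For the pairs involving $x$ we have $d_{T_i}(x,w_i)=d_{T_i}(x,w_{i+1})=1$ immediately, so it remains to determine $d_{T_i}(x,p)$ for each arc vertex $p$. First I would show this equals $d_T(x,p)$: a shortest $T$-path from $x$ to $p\in T_i$ cannot really leave $T_i$, for if $s$ is the last index at which it lies outside $T_i$ then its $(s{+}1)$-th vertex is a gateway in $\{x,w_i,w_{i+1}\}$, and replacing the whole initial segment by the single edge from $x$ to that gateway would strictly shorten the path. Writing $g(q):=d_T(x,q)$ for boundary vertices $q$, it therefore suffices to recover $g(p)$ from the boundary distances, and for this I would use the gateway structure globally. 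For $p\in T_i$ and any boundary vertex $q'$ in another region,
\[
d_T(p,q')=\min\Bigl(g(p)+g(q'),\ \min_{1\le j\le5}\bigl(d_T(p,w_j)+d_T(w_j,q')\bigr)\Bigr),
\]
because a shortest $p$--$q'$ path must pass through a gateway. Every term here is known except $g(p)$ and $g(q')$, and we also have the anchors $g(w_j)=1$. More generally, for any pair the triangle inequality through $x$ gives $g(q)+g(q')\ge d_T(q,q')$, with equality exactly when $x$ lies on a shortest $q$--$q'$ path; hence $g(q)=\max_{q'}\bigl(d_T(q,q')-g(q')\bigr)$ provided each $q$ has a partner $q'$ attaining equality. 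Such a partner I would supply by running a meridian through $x$ that continues a shortest path from $q$ and invoking Lemma~\ref{following} to certify that the resulting $q$--$q'$ path through $x$ is shortest.

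The hard part will be converting these relations into an actual determination of $g$. One cannot simply read $g(p)$ off the anchors, because the maximising partner $q'$ need not be any $w_j$: a neighbour of $x$ hidden deep inside $T_i$ can be strictly closer to $p$ than all five $w_j$, so that $g(p)$ exceeds $\max_j d_T(w_j,p)-1$, and it is precisely the across-$x$ partner relation that detects this. My plan is to solve the coupled (antitone) fixed-point system by bootstrapping from the anchors $g(w_j)=1$, repeatedly using equalities $g(q)+g(q')=d_T(q,q')$ in which one value is already known and appealing to the five-fold structure to guarantee that such equalities reach every boundary vertex. I would use the no-equidistant-triangle lemma (Lemma~\ref{no-equal-triangle}) together with properties~(i)--(iii) from the proof of Lemma~\ref{nice-reduce} to eliminate the off-by-one ambiguities that the triangle inequalities alone leave open, and I expect this step to be the technical heart of the argument. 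Once every $g(p)=d_T(x,p)=d_{T_i}(x,p)$ is known, all boundary distances of each $T_i$ are determined.
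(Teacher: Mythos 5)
Your handling of the pairs $y,z$ with $y,z\neq x$ --- rerouting each excursion outside $T_i$ through the three gateways $x,w_i,w_{i+1}$ to get $d_{T_i}(y,z)=d_T(y,z)$ --- is essentially the paper's argument and is fine. The problem is the second half. Determining $d_T(x,p)$ for a boundary vertex $p$ of $T_i$ is the entire content of the lemma, and what you offer there is not a proof but a programme: an antitone fixed-point system, a bootstrapping scheme from the anchors $g(w_j)=1$, meridians and Lemma~\ref{following} to manufacture partners, and a promised use of Lemma~\ref{no-equal-triangle} to ``eliminate off-by-one ambiguities'' --- a step you yourself call the technical heart and do not carry out. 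Nothing guarantees that the chain of equalities $g(q)+g(q')=d_T(q,q')$ starting from the $w_j$ reaches every boundary vertex, nor that the residual ambiguities can always be resolved, so the lemma is left unproved.

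Moreover, the premise that drives you into this machinery is false. You claim $g(p)$ may exceed $\max_j d_T(w_j,p)-1$ because the maximising partner need not be one of the $w_j$; in fact $w_{i+3}$ always attains it. Every $w_{i+3}$--$p$ path must enter $T_i$ through one of the gateways $x$, $w_i$, $w_{i+1}$, and since $w_{i+3}$ is neither consecutive on the boundary with $w_i$ or $w_{i+1}$ nor (by chordlessness) otherwise adjacent to them, we have $d_T(w_{i+3},w_i)=d_T(w_{i+3},w_{i+1})=2=d_T(w_{i+3},x)+1$. Entering via $w_i$ or $w_{i+1}$ therefore costs at least as much as entering via $x$, so $d_T(w_{i+3},p)=1+d_T(x,p)$, i.e.\ $d_T(x,p)=d_T(w_{i+3},p)-1$, which is known. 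This one observation is the paper's entire proof of the step you left open; no fixed-point argument, meridian, or equidistant-triangle lemma is needed.
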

\begin{proof}Note that a path between two such vertices in $T$ cannot be shorter than the shortest path in $T_i$: if a section of the path is outside $T_i$ then that section must consist of at least two edges and go between two of $v_i$, $v_{i+1}$ and $x$, but then it may be replaced by edges $v_ix$ and/or $v_{i+1}x$ without increasing the length. Thus it is sufficient to find $d_T(y,z)$ for each such pair. This is known except for pairs of the form $x,y$, where $y$ is on the boundary between $x_i$ and $x_{i+1}$. However, for any such $y$, any shortest path in $T$ between $v_{i+3}$ and $y$ must pass through either $v_i$, $v_{i+1}$ or $x$, and so, since $d(v_{i+3},v_i),d(v_{i+3},v_{i+1})\geq 2$, there is a shortest path which uses the edge $v_{i+3}x$. Thus $d(x,y)=d(v_{i+3},y)-1$.
\end{proof}

\subsection{Boundary distances determine the near-triangulation}\label{sec:main}
We have now established almost all the facts needed to prove the main result. The final piece of the jigsaw is to show that one of the reductions detailed in Sections \ref{sec:nice} and \ref{sec:misc} must be available to us. For this we need the following lemma, which is a modification of \cite[Lemma 15]{HP19}. 

We state the lemma with the weaker condition of a lower bound on the average degree of internal vertices, although when applying it in this section we have a bound on their minimal degree; we shall use this extra strength in Section \ref{sec:counter}. Let $T$ be a disc triangulation with  boundary length $n$. Recall that the boundary faces of $T$ are those which share an edge with the boundary cycle; we endow them with the natural cyclic ordering. Define a \textit{Cleveland vertex} to be an internal vertex lying on exactly two non-consecutive boundary faces (that is, it lies on exactly two boundary faces and those boundary faces are not consecutive). We write $\mathbb{I}_C$ for the indicator function of a condition $C$.
\begin{lemma}\label{layer}Let $T$ be a chordless disc triangulation with boundary length $n$. Suppose the average degree of internal vertices of $T$ is at least $d$. Let $m$ be the number of internal vertices on at least one boundary face, let $k$ be the number of edges incident with the boundary which do not form part of a boundary face, and let $c$ be the number of Cleveland vertices. Then $n\geq (d-5)m+k+c+5+\mathbb{I}_{m\geq 2}$.\end{lemma}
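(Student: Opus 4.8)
The plan is to extract a single counting identity from Euler's formula and then read off $n$, $m$, $k$ and $c$ from the cyclic structure of the boundary faces. I may assume $T$ has an internal vertex (otherwise $n=3$ and there is nothing to prove), so $m\ge1$; as $T$ is chordless with $n>3$, each boundary edge lies in a unique boundary face, whose third vertex (its \emph{apex}) is internal.

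First I would record the combinatorial Gauss--Bonnet identity. Writing $I$ for the number of internal vertices, Euler's formula and the fact that all internal faces are triangles give $\sum_v\deg v=6I+4n-6$, whence
\[
\sum_{\text{internal }v}(6-\deg v)+\sum_{\text{boundary }v}(4-\deg v)=6.
\]
The degree hypothesis enters here: $\sum_{\text{internal}}\deg v\ge dI$ gives $\sum_{\text{boundary}}(4-\deg v)\ge 6+(d-6)I$, and since $I\ge m$ and $d\ge6$ (in this section $d=6$, so the correction vanishes) this is at least $6+(d-6)m$.

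Next I would compute the boundary degree sum directly. List the boundary faces cyclically as $F_0,\dots,F_{n-1}$, with $F_{i-1},F_i$ sharing vertex $v_i$, and let $a_i$ be the apex of $F_i$. Every edge at a boundary vertex is either a boundary edge or a spoke to an internal vertex, and exactly the $k$ spokes counted in the statement fail to be a side of any boundary face. At $v_i$ the two faces $F_{i-1},F_i$ contribute the side-spokes $v_ia_{i-1}$ and $v_ia_i$, which coincide exactly when $a_{i-1}=a_i$; so, writing $p$ for the number of indices with $a_{i-1}=a_i$, there are $2n-p$ side-spokes in all. Hence $\sum_{\text{boundary}}\deg v=2n+(2n-p)+k=4n+k-p$, so $\sum_{\text{boundary}}(4-\deg v)=p-k$, and comparison with the Gauss--Bonnet bound yields $p\ge k+6+(d-6)I$.

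The last step, converting $p$ into $n$, is the one I expect to be the main obstacle. For each apex $w_j$ its boundary faces form some maximal runs in the cyclic sequence; let $r_j\ge1$ count these and put $R=\sum_j r_j$. A Cleveland vertex is precisely an apex whose two boundary faces form two singleton runs, so each contributes a second run and $R\ge m+c$. If $m\ge2$ then no apex covers the whole cycle, the runs are genuine arcs, every index with $a_{i-1}\ne a_i$ separates two arcs, and so $p=n-R$; thus $n=R+k+6+(d-6)I\ge(d-5)m+k+c+6$, which is the claim. If $m=1$ the single apex lies on all $n\ge3$ faces, so it is not Cleveland ($c=0$) and the cyclic wrap-around forces $p=n$ rather than $n-1$, giving $n\ge k+6+(d-6)=(d-5)+k+5$. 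This wrap-around discrepancy of $1$ between the two cases is exactly what the term $\mathbb{I}_{m\ge2}$ records, and getting this bookkeeping right, alongside the Cleveland contribution to $R$, is the crux.
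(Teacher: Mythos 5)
Your argument is correct and is in substance the same as the paper's: both rest on Euler's formula together with a single cyclic traversal of the boundary faces that tracks where the internal apex changes, and your bookkeeping $n-p=R\ge m+c$ (with the $k$ non-face spokes entering the boundary degree sum separately) is exactly the paper's count of the ``cyclic list of internal vertices'' and of faces with two internal vertices, Cleveland vertices contributing the extra unit in both versions. The only cosmetic differences are that you phrase the Euler count as a combinatorial Gauss--Bonnet identity on the disc instead of adding an outer apex $v^*$, and you make the $m=1$ wrap-around explicit; note only that, like the paper, you implicitly assume $T$ has an internal vertex (for the single triangle $n=3$ the stated inequality fails, so the lemma must be read with $m\ge 1$).
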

\begin{proof}Note that if $m=1$ then all boundary faces have a common internal vertex, so we must have $k=0$, $c=0$ and $n\geq d$ as all the neighbours of the internal vertex are external. Thus we may assume $m\geq 2$. The fact that the boundary has no chords implies that the subgraph induced by internal vertices adjacent to the boundary is connected.

Adding a new vertex $v^*$ in the unbounded face of $T$ adjacent to all vertices on the boundary gives a triangulation $\tilde T$ of the sphere. with $\abs X+\abs Y+\abs Z+1$ vertices, \ie all faces, including the unbounded one, have degree $3$. It follows (using Euler's formula) that $E=3V-6$, where $F$, $E$ and $V$ denote the number of faces, edges and vertices of $\tilde T$.

We next count the faces of $T$ with at least one boundary vertex. We count these in cyclic order around the boundary: starting from any boundary face including boundary vertices $x,y$, we proceed through all internal faces meeting $y$ (if any) in order, then to the next boundary face meeting boundary vertices $y,z$, and so on. Looking instead at the internal vertices included in faces, the ordering will consist of some boundary faces meeting an internal vertex $u$, followed by a face meeting two internal vertices $u,v$, then possibly some boundary faces meeting $v$, followed by a face meeting $v$ and some other internal vertex $w$, and so on. This gives a cyclic list of internal vertices; however, the same internal vertex may appear in this list more than once. For each entry in the list, we note the number of boundary faces, if any, encountered at the corresponding point in the cyclic ordering. Note that the total number of faces meeting two internal vertices and one boundary vertex is equal to the total length of this list (since each two adjacent entries in the list correspond to one such face). Additionally, every internal vertex which is on a boundary face appears as an entry in this list with a positive number of boundary faces, every Cleveland vertex appears twice as an entry with a positive number of boundary faces, and every edge incident with a boundary vertex which does not form part of a boundary face corresponds to an entry on the list with no boundary faces. Thus the length of the list, and hence the number of faces with exactly one boundary vertex, is at least $m+k+c$.

Additionally, the number of boundary faces is $n$, and so there are at least $n+m+k+c$ faces containing at least one boundary vertex. There is a one-to-one correspondence between such faces and edges meeting the boundary, not counting boundary edges or the new edges of $\tilde T$.

Thus, we may calculate the number of edges of $\tilde T$ as follows. There are $n$ edges between boundary vertices, and another $n$ meeting $v^*$. Summing the degrees of internal vertices counts other edges meeting the boundary once each and other edges not meeting the boundary twice. If there are $m+s$ internal vertices in total, then, we have 
\[d(m+s)+(m+k+c+n)+4n\leq 2E=6(m+s+n+1)-12,\]
and therefore 
\[n\geq (d-5)m+k+c+s+6\geq (d-5)m+k+c+6.\qedhere\] 
\end{proof}

We are now ready to complete the proof.
\begin{proof}[Proof of Theorem \ref{main:tri}]
We proceed by induction on the number of boundary vertices. Recall that if $T$ has a chord we may reduce to two smaller cases. Likewise, if any internal vertex has five or more boundary neighbours then we may identify these neighbours by Lemma \ref{five}, and reduce (using Lemma \ref{reduce1}) to one or more subtriangulations, all of which have strictly fewer than $n$ boundary vertices, thus completing the inductive step of the proof.

If any internal vertex has four consecutive boundary neighbours, we may identify them by Lemma \ref{four}. Suppose that $w$ has consecutive boundary neighbours $v_1,v_2,v_3,v_4$. We claim that in this case we can deduce the distance between boundary vertices in $T\setminus\{v_2,v_3\}$; it suffices to check we can deduce $d_T(v_i,w)$ for each $i>4$. As in the proof of Lemma \ref{nice-reduce}, if the distances from $v_i$ to $v_2$ and $v_3$ differ, say they are $q,q+1$ in some order, then one of $v_1,v_4$ must be at distance $q-1$, so the distance to $w$ must be $q$. If both distances are $q$ then at least one of $v_1$ or $w$ is at distance $q-1$, and since the distance to $w$ is not $q$ by Lemma \ref{no-equal-triangle}, it must be $q-1$.

Thus we may reduce to a smaller case if any of the above configurations exist. If $T$ contains no chord and no internal vertex adjacent to four consecutive boundary vertices or to any five boundary vertices, we will show that it must contain a nice configuration. Then, by Lemma \ref{nice-distances}, we may identify such a configuration, and, by Lemma \ref{nice-reduce}, reduce to a near-triangulation with a shorter boundary.

To establish this, we apply Lemma \ref{layer} with $d=6$. By assumption, there is no internal vertex on more than two boundary faces. Let $m_1$ be the number of internal vertices on one boundary face, let $m_2$ be the number of internal vertices on two consecutive boundary faces, and let $c$ be the number of Cleveland vertices. The result of that lemma gives $n=m_1+2m_2+2c\geq (m_1+m_2+c)+c+k+5$, where $k$ is the number of edges meeting the boundary but not on a boundary face (note that, by assumption, such an edge meets the boundary only once). Thus $m_2\geq k+5$, so as we traverse the boundary we must at some point find two pairs of consecutive boundary faces, with each pair having an internal vertex in common, such that every edge meeting the boundary between them is on a boundary face. This is a nice configuration, completing the proof.
\end{proof}

\section{Disc quadrangulations}
In this section we prove Theorem \ref{main:quad}. The proof follows the same lines as that of Theorem \ref{main:tri} but is slightly simpler, largely due to identification of boundary distances in the reduced near-quadrangulation being more straightforward. Again, by Lemma \ref{lem:hamilton}, we may assume that the cyclic ordering of boundary vertices is known.

Let $Q$ be a chordless disc quadrangulation of boundary length $n$ (where, necessarily, $n$ is even) with all internal vertices having degree at least $4$, and having at least one internal vertex. We first prove a tight bound on the number of edges meeting the boundary.
\begin{lemma}\label{e-circ}Let $e^\circ$ be the number of internal edges meeting one boundary vertex (necessarily exactly one, since $Q$ is chordless). Then $e^\circ\leq n-4$.\end{lemma}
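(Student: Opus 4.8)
The plan is to obtain an exact expression for the number of edges of $Q$ in terms of $n$ and the number of internal vertices, and then play this off against the minimum-degree condition. Write $p$ for the number of internal vertices, so that $Q$ has $n+p$ vertices in all, let $E$ be its number of edges, and let $F$ be the number of bounded (quadrangular) faces. Since $Q$ is a connected plane graph, Euler's formula gives $(n+p)-E+(F+1)=2$, the $+1$ accounting for the unbounded face.

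Next I would double-count edge--face incidences. Each edge lies on exactly two faces; each bounded face is a quadrangle and so contributes $4$, while the unique unbounded face is bounded by the simple boundary cycle and contributes $n$. This yields $2E=4F+n$. Eliminating $F$ between this identity and Euler's formula gives $2E=3n+4p-4$, an exact count of the edges.

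Now I would split the edges according to how many boundary vertices they meet. There are $n$ boundary edges (each meeting two boundary vertices); since $Q$ is chordless, every other edge meets \emph{at most} one boundary vertex, and those meeting exactly one are precisely the $e^\circ$ internal edges in question. Summing vertex degrees, the boundary vertices contribute $2n+e^\circ$ to $2E$ (two boundary edges at each vertex, together with its incident internal edges, the latter counted once at their unique boundary endpoint), so the internal vertices contribute $2E-2n-e^\circ$. As every internal vertex has degree at least $4$, this is at least $4p$. Substituting $2E=3n+4p-4$ gives $3n+4p-4-2n-e^\circ\geq 4p$, which simplifies at once to $e^\circ\leq n-4$.

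The argument is short, and I expect the only place needing care to be the edge--face incidence count: one must check that each bounded face genuinely contributes $4$, so that no degenerate face corrupts the total, and that the unbounded face contributes exactly $n$ because the boundary is a simple cycle. The chordless hypothesis is what keeps the boundary-vertex degree bookkeeping clean, ruling out boundary-to-boundary internal edges that would otherwise need separate tracking, while the minimum internal degree $4$ supplies exactly the slack that converts the edge count into the stated inequality, with equality precisely when every internal vertex has degree $4$.
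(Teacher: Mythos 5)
Your proof is correct and is essentially the paper's own argument: both derive the exact edge count $2E=3n+4p-4$ from Euler's formula together with the face-degree sum $2E=4F+n$, and then compare against the degree sum of internal vertices, using chordlessness to ensure each non-boundary edge has exactly one internal or one boundary endpoint to account for. The only difference is cosmetic — you compute the boundary-vertex degree sum $2n+e^\circ$ and subtract, whereas the paper writes the internal degree sum directly as $2(e-n)-e^\circ$.
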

\begin{proof}Write $f,e,v$ for the number of faces (including the external face), edges and vertices of $Q$. Note that summing degrees of internal vertices of $Q$ counts each internal edge meeting a boundary vertex once and counts each other internal edge twice. Thus $2(e-n)-e^\circ\leq 4(v-n)$, \ie $2e\leq 4v-2n+e^\circ$. Summing face degrees gives $4(f-1)+n=2e$, \ie $f=e/2-n/4+1$. Thus Euler's formula gives $e=v+e/2-n/4-1$. Combining these facts, we obtain $e^\circ\leq n-4$. 
\end{proof}
Let $Q'$ be the subgraph obtained by removing all boundary vertices of $Q$. Note that chordlessness implies that $Q'$ is connected. We now give the corresponding result for near-quadrangulations of \cite[Lemma 2.2]{ABH}. Note that, if desired, the equivalent result where chords are permitted may be easily deduced by breaking $Q$ along chords into smaller chordless near-quadrangulations. 
\begin{lemma}\label{quad-layer}Let $m$ be the degree of the external face of $Q'$, \ie the boundary length of $Q'$, counting edges with multiplicity, and let $r$ be the number of boundary vertices of $Q'$. We have $m\leq n-8$, and consequently $r\leq n-8+\mathbb{I}_{n=8}$.\end{lemma}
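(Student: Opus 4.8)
The plan is to compute the boundary length $m$ of $Q'$ \emph{exactly} in terms of $e^\circ$, by applying Euler's formula twice---once to $Q$ and once to $Q'$---and then to invoke Lemma~\ref{e-circ}. First I would check that $Q'$ is a plane graph all of whose bounded faces are quadrilaterals. Since $\partial Q$ is a simple cycle enclosing every internal vertex, deleting the boundary vertices together with their incident edges merges the outer face of $Q$ with exactly those faces of $Q$ that meet $\partial Q$, and all of these together form the single outer face of $Q'$; every remaining face of $Q$ survives unchanged, and these are precisely the quadrilateral faces not meeting the boundary. Writing $s$ for the number of internal vertices of $Q$ and $e'=e-n-e^\circ$ for the number of edges of $Q'$ (equivalently, the internal--internal edges of $Q$), the face-handshake $2e'=4f'+m$ together with Euler's formula $f'=e'-s+1$ for the count $f'$ of bounded faces gives $m=4s-2e'-4$.

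Next I would eliminate $e'$ and $s$. Applying Euler's formula to $Q$ itself (all bounded faces again quadrilaterals, the outer face having degree $n$) yields $e=2(s+n)-2-n/2$, so that $e'=e-n-e^\circ=2s+n/2-2-e^\circ$. Substituting into $m=4s-2e'-4$ cancels the $s$-terms and leaves the clean identity $m=2e^\circ-n$. Lemma~\ref{e-circ} then gives $m=2e^\circ-n\le 2(n-4)-n=n-8$, as required.

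Finally I would deduce the bound on $r$. The outer face of $Q'$ is bounded by a closed walk of length $m$, which therefore passes through at most $m$ distinct vertices whenever $m\ge 1$; as every boundary vertex of $Q'$ lies on this walk, $r\le m\le n-8$ in that case. The sole exception is $m=0$, which for the connected graph $Q'$ forces $Q'$ to be a single vertex, so $r=1$; here $m=2e^\circ-n=0$ together with $e^\circ\le n-4$ forces $n\ge 8$, and when $n=8$ we get $r=1=(n-8)+1$, exactly accounting for the indicator term, while for $n>8$ we simply have $r=1\le n-8$.

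The main obstacle I anticipate is not the arithmetic but the two bookkeeping points underlying it: verifying that the bounded faces of $Q'$ really are all quadrilaterals (so that the face-handshake count for $Q'$ is legitimate, and hence $m=2e^\circ-n$ genuinely holds), and correctly isolating the degenerate single-vertex case, which is precisely where the $\mathbb{I}_{n=8}$ correction enters. Everything else is a short double application of Euler's formula feeding into Lemma~\ref{e-circ}.
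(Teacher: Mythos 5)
Your argument is correct in the paper's setting, but it takes a genuinely different route. The paper works entirely inside $Q$: it counts the $4f^\circ$ face--edge incidences over the set $F^\circ$ of internal faces meeting the boundary, obtains the identity $m=4f^\circ-2e^\circ-n$, and then combines the local observation $f^\circ\leq e^\circ$ with Lemma~\ref{e-circ}. You instead apply Euler's formula twice, to $Q$ and to $Q'$, and arrive at the exact identity $m=2e^\circ-n$ before invoking Lemma~\ref{e-circ}; comparing the two proofs shows that (when $Q'$ is connected) the paper's inequality $f^\circ\leq e^\circ$ is in fact an equality, i.e.\ every face of $F^\circ$ contains exactly two edges of $E^\circ$, which is a slightly sharper piece of information. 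The trade-off is that your computation leans on the connectedness of $Q'$ in three places --- the Euler count $f'=e'-s+1$, the description of the outer face boundary as a single closed walk of length $m$, and the step ``$m=0$ forces $Q'$ to be a single vertex'' --- whereas the paper's incidence count for the bound on $m$ is purely local and does not need it. You do invoke connectedness explicitly once, and the paper asserts just before the lemma that chordlessness gives it, so this is consistent with the paper's framework; but you should state the dependence at the Euler step as well, since that is where it silently enters. (If $Q'$ had $c$ components the identity would degrade to $m=2e^\circ-n-4(c-1)$, which still yields $m\leq n-8$ but would require redoing the deduction of the bound on $r$.) Your treatment of the $r$ bound and of the $\mathbb{I}_{n=8}$ term via the boundary walk is essentially the same as the paper's.
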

\begin{proof}Let $F^\circ$ be the set of internal faces of $Q$ meeting at least one boundary vertex, let $E^\circ$ be the set of edges of $Q$ meeting a boundary vertex, and write $f^\circ,e^\circ$ for their sizes. Consider a face $x\in F^\circ$. If $x$ contains two consecutive boundary edges then both remaining edges of $x$ are necessarily in the set $E^\circ$ of Lemma \ref{e-circ} (since if one were a boundary edge, the other would be a chord); otherwise if $x$ contains a boundary edge then both adjacent edges of $x$ are in $E^\circ$. If $x$ does not contain a boundary edge then both edges of $x$ meeting a boundary vertex are in $E^\circ$. Thus every face in $F^\circ$ contains at least two edges of $E^\circ$ and so $f^\circ\leq e^\circ$.

Now the $4f^\circ$ pairs consisting of a face in $F^\circ$ and an edge of that face count the boundary edges of $Q'$ (with multiplicity), the boundary edges of $Q$ (once each) and the edges of $E^\circ$ (twice each). Thus $m=4f^\circ-2e^\circ-n\leq 2f^\circ-n\leq n-8$, as required. Finally, since the boundary of $Q'$ is a connected graph with $r$ vertices, it has at least $r-1$ edges, and if it has exactly $r-1$ edges all are bridges and hence counted twice. It follows that $r\leq m\leq n-8$ unless $m=0$ and $r=1$ (when $n\geq 8$).
\end{proof}
In particular, note that there can be no internal vertices if $n<8$.
\begin{lemma}\label{shortcut-quad}For $n\geq 6$, let $Q$ be a chordless disc quadrangulation with boundary length $n$ such that every internal vertex has degree at least $4$, and let $x$ and $y$ be boundary vertices. Then $d_Q(x,y)\leq\floor{n/2}-\mathbb{I}_{\deg(x)>2}-\mathbb{I}_{\deg(y)>2}$.\end{lemma}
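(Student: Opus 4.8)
The plan is to route from $x$ to $y$ by stepping one vertex (or two) into the interior, crossing $Q'=Q\setminus\partial Q$ along its outer boundary, and surfacing again near $y$. First I would dispose of a structural preliminary: since $Q$ is chordless with $n\ge 6$, it must contain an internal vertex, as otherwise every edge would be a boundary edge and the unique internal face would be the whole $n$-gon, forcing $n=4$. Hence Lemma~\ref{quad-layer} applies, and the external face of $Q'$ has degree $m\le n-8$. Because $Q'$ is connected, this external face is bounded by a single closed walk $W$ of length $m$ (using edges of $Q$), so any two vertices occurring on $W$ are joined along the shorter of the two arcs of $W$ by a walk of length at most $m/2\le n/2-4$. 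In particular $d_Q(u,u')\le n/2-4$ for any two vertices $u,u'$ lying on the boundary of $Q'$. I would also record the standard fact that any internal vertex adjacent to a boundary vertex of $Q$ lies on $W$: removing the boundary vertices merges the faces around such a vertex into the external region of $Q'$.

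Next I would attach $x$ and $y$ to $W$, the key quantity being the cost of reaching the boundary of $Q'$ from a boundary vertex $v$. I claim there is a vertex $g_v$ on $W$ with $d_Q(v,g_v)\le 2-\mathbb{I}_{\deg(v)>2}$. If $\deg(v)>2$, then chordlessness forces any non-boundary-neighbour of $v$ to be internal, so $v$ has an internal neighbour $g_v$, which lies on $W$ by the remark above, giving $d_Q(v,g_v)=1$. If instead $\deg(v)=2$, let $v_-,v_+$ be the two boundary neighbours of $v$ and consider the unique internal face at $v$; being a quadrilateral $v_-vv_+z$, its fourth corner $z$ is adjacent to both $v_-$ and $v_+$. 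A short argument using chordlessness and $n\ge 6$ shows $z$ is internal (were $z$ a boundary vertex, at least one of $zv_-,zv_+$ would have to be a chord, since $z$ cannot be a boundary neighbour of both $v_-$ and $v_+$ unless $n=4$). Thus $g_v:=z$ lies on $W$ and $d_Q(v,g_v)\le 2$ via either $v_-$ or $v_+$.

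Finally I would combine the estimates. Using the gateways $g_x,g_y$ on $W$,
\[
d_Q(x,y)\le d_Q(x,g_x)+d_Q(g_x,g_y)+d_Q(g_y,y)\le \bigl(2-\mathbb{I}_{\deg(x)>2}\bigr)+(n/2-4)+\bigl(2-\mathbb{I}_{\deg(y)>2}\bigr),
\]
which equals $n/2-\mathbb{I}_{\deg(x)>2}-\mathbb{I}_{\deg(y)>2}=\floor{n/2}-\mathbb{I}_{\deg(x)>2}-\mathbb{I}_{\deg(y)>2}$ since $n$ is even. I expect the main obstacle to be the degree-$2$ gateway: one must verify that the opposite corner $z$ of the unique face at a degree-$2$ vertex is genuinely internal (hence on $W$) and that the two extra steps it costs are exactly what cancels the missing indicator, so that the budget $2+2-4=0$ leaves precisely $n/2$ before subtracting the indicators. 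The remaining ingredients — that a chordless $Q$ with $n\ge 6$ has an internal vertex so Lemma~\ref{quad-layer} is applicable, and that the outer boundary of the connected graph $Q'$ is a single closed walk whose vertices are pairwise within $m/2$ — are routine once this framework is set up.
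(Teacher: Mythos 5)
Your proof is correct and follows essentially the same route as the paper: both step from $x$ and $y$ into the boundary of $Q'$, bound its length by $n-8$ via Lemma~\ref{quad-layer}, and traverse at most half of it. The only cosmetic difference is at degree-$2$ vertices, where you route through the opposite corner of the unique incident face while the paper first passes to a boundary neighbour of degree greater than $2$; the two devices are equivalent.
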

\begin{proof}It is sufficient to prove the case $\deg(x),\deg(y)>2$. The other cases follow from this, since $d_Q(x,y)\leq d_Q(x',y)+1$, where $x'$ is a boundary vertex adjacent to $x$, and chordlessness implies that $\deg(x')>2$ whenever $\deg(x)=2$.

If $\deg(x),\deg(y)>2$ then let $x''$ and $y''$ be internal neighbours of $x$ and $y$ respectively. As in the proof of Lemma \ref{shortcut}, $d_{Q'}(x'',y'')\leq\floor{m/2}$, where $m$ is the length (with multiplicity) of the boundary of $Q'$. By Lemma \ref{quad-layer}, $m\leq n-8$, giving the required bound.\end{proof}

For near-quadrangulations, we redefine a nice configuration as follows. For $k\geq4$, a \textit{nice configuration} of length $k$ in a near-quadrangulation $Q$ consists of $k+1$ consecutive boundary vertices, which we label $v_0,\ldots,v_k$, and $k-3$ internal vertices $w_2\cdots w_{k-2}$, such that the quadrangles $v_0v_1v_2w_2$, $v_{k-2}v_{k-1}v_kw_{k-2}$ and $w_iv_iv_{i+1}w_{i+1}$ for each $2\leq i\leq k-3$ are all faces of $Q$. See Figure \ref{fig:nice-quad} for an example.
\begin{figure}
\centering
\begin{tikzpicture}
\filldraw (0,1) circle (0.075) node [anchor=north east] {$v_0$};
\filldraw (0,0) circle (0.075) node [anchor=north] {$v_1$};
\filldraw (1,0) circle (0.075) node [anchor=north] {$\cdots$};
\filldraw (2,0) circle (0.075);
\filldraw (3,0) circle (0.075);
\filldraw (4,0) circle (0.075);
\filldraw (5,0) circle (0.075) node [anchor=north] {$v_{k-1}$};
\filldraw (5,1) circle (0.075) node [anchor=north west] {$v_k$};
\draw[ultra thick] (0,1) -- (0,0) -- (5,0) -- (5,1);
\draw[ultra thick, dashed] (-.7,1.7) -- (0,1);
\draw[ultra thick, dashed] (5,1) -- (5.7,1.7);
\draw (0,1) -- (5,1);
\foreach \x in {1,2,3,4}{\draw (\x,1) -- (\x,0);}
\filldraw [fill=white] (1,1) circle (0.075) node [anchor=south] {$w_2$};
\filldraw [fill=white] (2,1) circle (0.075) node [anchor=south] {$\cdots$};
\filldraw [fill=white] (3,1) circle (0.075);
\filldraw [fill=white] (4,1) circle (0.075) node [anchor=south] {$w_{k-2}$};
\end{tikzpicture}
\caption{A nice configuration for a near-quadrangulation, with $k\geq 4$.}\label{fig:nice-quad}
\end{figure}
\begin{lemma}\label{quad-nice-exists}If $Q$ is a chordless disc quadrangulation with boundary length $n>4$ such that all internal vertices have degree are at least $4$, then $Q$ contains a nice configuration.
\end{lemma}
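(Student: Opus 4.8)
The plan is to reduce the statement to a local condition on the boundary degrees, and then to force that condition by a short counting argument based on Lemma~\ref{e-circ}. The key observation is that any run of consecutive boundary vertices $v_0,v_1,\dots,v_k$ with $k\geq4$, $\deg(v_1)=\deg(v_{k-1})=2$, and $\deg(v_i)=3$ for every $2\leq i\leq k-2$, is automatically the vertex set of a nice configuration. Thus it suffices to exhibit such a degree pattern somewhere along the boundary cycle.

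To establish the observation I would walk along the strip. A boundary vertex of degree $2$ lies on a single bounded face, which (as $Q$ is chordless and $n>4$) must be a quadrangle $v_0v_1v_2w_2$ with $w_2$ internal and $w_2\sim v_0,v_2$; this is one capping quadrangle. Walking inward, at a degree-$3$ vertex $v_i$ the two bounded faces meeting $v_i$ share the internal edge $v_iw_i$, so the face on the far side of $v_iw_i$ is a quadrangle $w_iv_iv_{i+1}w_{i+1}$; since $Q$ is chordless and $v_{i+1}$ does not have degree $2$, its fourth vertex cannot be a boundary vertex, so it is an internal neighbour of $v_{i+1}$, which I name $w_{i+1}$. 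Iterating produces the strip of quadrangles $w_iv_iv_{i+1}w_{i+1}$, which continues until the second degree-$2$ vertex $v_{k-1}$ is reached; its single face then closes the strip into the other capping quadrangle $v_{k-2}v_{k-1}v_kw_{k-2}$. I would present this as an induction on $i$, treating $k=4$ (where the two capping quadrangles meet in the single internal vertex $w_2$) as the base case.

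It remains to locate such a run, and two facts combine to do so. First, no two degree-$2$ boundary vertices are adjacent: if $v_{i-1}$ and $v_i$ both had degree $2$, the unique face on the boundary edge $v_{i-1}v_i$ would be simultaneously the single bounded face at each of them, forcing a quadrangle through $v_{i-2},v_{i-1},v_i,v_{i+1}$ and hence the chord $v_{i-2}v_{i+1}$, which is impossible for $n>4$. Second, since $Q$ is chordless, every non-boundary edge at a boundary vertex meets exactly one boundary vertex, so summing $\deg(v)-2$ over the boundary vertices counts these edges once each and yields $\sum_v(\deg(v)-2)=e^\circ\leq n-4$ by Lemma~\ref{e-circ} (which applies since $n>4$ forces an internal vertex). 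Writing $z$, $s$, $t$ for the numbers of boundary vertices of degree $2$, degree $3$, and degree at least $4$, this reads $s+2t\leq(z+s+t)-4$, i.e.\ $t\leq z-4$; in particular $z\geq4$. The $z$ pairwise non-adjacent degree-$2$ vertices split the boundary cycle into $z$ disjoint arcs, each non-empty and containing only vertices of degree at least $3$. If no nice configuration existed, then by the characterization above no arc could consist solely of degree-$3$ vertices, so each of the $z$ arcs would contain a vertex of degree at least $4$; this gives $t\geq z$, contradicting $t\leq z-4$. Hence some arc is entirely of degree $3$, and together with its two degree-$2$ endpoints it forms a nice configuration.

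I expect the main obstacle to be the structural step, namely verifying that the degree pattern genuinely forces the entire strip of quadrangles to close up. The delicate points there are to exclude, at each stage, the fourth vertex of a quadrangle being a boundary vertex — which would either create a chord or make an interior vertex have degree $2$ — and to handle the two capping quadrangles and the degenerate case $k=4$ correctly. By contrast, once the characterization is in hand the counting is routine, since Lemma~\ref{e-circ} and the non-adjacency of degree-$2$ vertices feed directly into a pigeonhole over the arcs.
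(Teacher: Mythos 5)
Your proposal is correct and follows essentially the same route as the paper: both use Lemma~\ref{e-circ} to show that the degree-$2$ boundary vertices (the paper's $S_0$) outnumber those of degree at least $4$ by at least $4$, and then pigeonhole over the boundary arcs between degree-$2$ vertices to find a run of degree-$3$ vertices capped by two degree-$2$ vertices, which is exactly a nice configuration. Your detailed verification that this degree pattern forces the strip of quadrangles to close up is a step the paper asserts in a single sentence, but the underlying argument is the same.
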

\begin{proof}By Lemma \ref{e-circ} the number of internal edges meeting an external vertex is at most $n-4$. Writing $S_0,S_1,S_{\geq 2}$ for the sets of external vertices with zero, one and at least two internal neighbours respectively, we have $\abs{S_0}\geq\abs{S_{\geq2}}+4$. Consequently the vertices of $S_0$ divide the boundary into $\abs{S_0}\geq 4$ intervals, some of which contain no vertices of $S_{\geq2}$ (and each of which has at most $n-3$ edges). Consequently, relabelling if necessary, we have, for some $k\geq 3$, distinct consecutive boundary vertices $v_0,\ldots,v_k$ with $v_1,v_{k-1}\in S_0$ and $v_i\in S_1$ for each $2\leq i\leq k-2$. We cannot have $k=3$, since then $v_0v_1v_2v_3$ would be a face, making $v_0v_3$ a chord, so $k\geq 4$. For each $2\leq i\leq k-2$, the vertex $v_i$ has a unique internal neighbour $w_i$, and together with $v_0,\ldots,v_k$ these form a nice configuration.
\end{proof}

We now need to show that a nice configuration may be identified from the information available. The proof will be similar to the triangulation case.
\begin{lemma}\label{quad-geodesic}Suppose that $Q$ is a disc quadrangulation with all internal vertices having degree at least $4$, and $v_0,\ldots,v_k$ are consecutive boundary vertices with $\deg(v_i)\geq 3$ for each $1\leq i\leq k-1$. Then $d_Q(v_0,v_k)=k$, and the unique path of length exactly $k$ is $v_0v_1\cdots v_k$.
\end{lemma}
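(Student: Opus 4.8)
The plan is to mirror the proof of Lemma~\ref{geodesic}: take a minimal counterexample and exhibit a ``long way round'' path whose existence contradicts minimality, with Lemma~\ref{quad-layer} playing the role that \cite[Lemma 2.2]{ABH} plays there. So suppose the statement fails and choose a counterexample with the fewest vertices, and among these with $k$ least; let $P$ be a $v_0$--$v_k$ path witnessing failure, so either $|P|<k$, or $|P|=k$ but $P\neq v_0v_1\cdots v_k$. First I would reduce to the case that $P$ avoids $v_1,\ldots,v_{k-1}$: if $P$ passes through some $v_j$ with $0<j<k$, then the configurations $v_0,\ldots,v_j$ and $v_j,\ldots,v_k$ satisfy the hypotheses with a strictly smaller value of $k$, so by minimality $d(v_0,v_j)=j$ and $d(v_j,v_k)=k-j$ hold with $v_0\cdots v_j$ and $v_j\cdots v_k$ the unique paths of these lengths; concatenating forces $P=v_0\cdots v_k$ and $|P|=k$, a contradiction in either case. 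Hence $P$ lies in $Q':=Q\setminus\{v_1,\ldots,v_{k-1}\}$.

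The core of the argument is a sub-claim, analogous to the opening paragraph of Lemma~\ref{geodesic}: for $0\leq i<j\leq k$ with $j\geq i+2$, the vertices $v_i$ and $v_j$ are neither adjacent nor have a common neighbour. I would prove this by minimising $j-i$ and examining the disc quadrangulation $R$ enclosed by $v_i\cdots v_j$ together with the edge $v_iv_j$ (if they are adjacent) or the path $v_ixv_j$ through a common neighbour $x$. Minimality, together with the fact that $Q$ is triangle-free (so consecutive boundary vertices have no common neighbour) and that each $v_t$ has an internal neighbour $w_t$, shows that $R$ contains at least $j-i-1$ distinct internal vertices adjacent to its boundary, while its boundary length is at most $j-i+2$; this contradicts Lemma~\ref{quad-layer} (using the chord-permitted form where necessary), which forces the number of such vertices to be at most $(j-i+2)-8$.

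With this in hand I would analyse the strip of faces incident with at least one of $v_1,\ldots,v_{k-1}$. Triangle-freeness excludes a common neighbour of consecutive $v_t,v_{t+1}$ and the sub-claim excludes common neighbours at larger separation, so no face meets two non-consecutive $v_t$, and every far-side vertex is adjacent to at most one removed vertex; a short separate argument (a would-be interior vertex of the strip forces a degree-two vertex adjacent to an internal vertex, which is impossible) shows the strip is a disc with no interior vertices. An Euler-characteristic count then gives that the far boundary $P'$ of the strip, a $v_0$--$v_k$ path avoiding $v_1,\ldots,v_{k-1}$, has length $|P'|=2\sum_{t=1}^{k-1}\deg(v_t)-5k+8\geq k+2$. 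Removing $v_1,\ldots,v_{k-1}$ (splitting at cutvertices into disc quadrangulations if some $w_t$ is itself a boundary vertex, exactly as in Lemma~\ref{geodesic}) leaves $P'$ as a consecutive boundary path each of whose interior vertices lost at most one edge and so has degree at least $3$ in $Q'$. By minimality the lemma applies to $Q'$ and $P'$, giving $d_{Q'}(v_0,v_k)=|P'|\geq k+2$; but $P\subseteq Q'$ has length at most $k$, a contradiction.

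I expect the main obstacle to be the step establishing that $P'$ is strictly longer than $k$: this is where all the degree information is used, and it relies on the sub-claim to guarantee both that the strip is a clean disc (so the Euler count is valid and yields no interior vertices) and that the interior vertices of $P'$ survive the removal with degree at least $3$. The subtlety, absent in the triangulation case where the larger degree bound left more slack, is that a quadrangulation interior vertex has degree only $4$, so losing two edges would drop it to degree $2$; ruling this out is exactly what the no-common-neighbour sub-claim achieves.
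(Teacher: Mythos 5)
Your proof is correct and follows essentially the same route as the paper's: the same minimal-counterexample setup, the same sub-claim that no two of the $v_i$ are adjacent (beyond consecutive ones) or share a common neighbour, and the same far-side path $P'$ of length at least $k+2$ along the strip of faces, whose status as a boundary geodesic after deleting $v_1,\ldots,v_{k-1}$ contradicts minimality. The only cosmetic difference is that you derive the sub-claim from Lemma~\ref{quad-layer} (the second-layer bound) where the paper applies Lemma~\ref{e-circ} (the bound on internal edges meeting the boundary) directly; both yield the same contradiction.
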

\begin{proof}
First we argue that if $0\leq i<j\leq k$ then $v_i,v_j$ are not adjacent unless with $j=i+1$, and do not have a common neighbour. Indeed, suppose not and take a counterexample with $j-i$ as small as possible. Now either $v_iv_{i+1}\cdots v_j$ (if $v_i$ and $v_j$ are adjacent) or $v_iv_{i+1}\cdots v_jw$ (if they have a common neighbour $w$) forms a cycle $C$, and $C$ together with the vertices inside it induces a quadrangulation of $C$, which is chordless by minimiality of the counterexample. However, each $\ell$ with $i<\ell<j$ meets at least one internal edge of this quadrangulation of $C$, so at least $\abs{C}-3$ internal edges meet boundary vertices, contradicting Lemma \ref{e-circ}.

Now we prove the desired result. Suppose it is not true and take a counterexample with as few total vertices in the near-quadrangulation as possible, and among such counterexamples, with $k$ as small as possible. Let $P$ be either a shorter path or a different path of length $k$. If $v_j$ is on $P$ for some $1\leq j<k$ then $P$ contains either a $v_0$-$v_j$ path of length at most $j$ other than $v_0v_1\cdots v_j$ or a $v_j$-$v_k$ path of length at most $k-j$ other than $v_jv_{j+1}\cdots v_k$, contradicting minimality in each case. Thus $P$ avoids $v_1,\ldots,v_{k-1}$.

Consider the set of faces incident with at least one of $v_1,\ldots,v_{k-1}$. The number of such faces is $k+\sum_{i=1}^{k-1}(\deg(v_i)-3)\geq k$. There is another path $P'$ from $v_0$ to $v_k$ around the other side of this strip of faces, which has length at least $k+2$ and is disjoint from $\{v_1,\ldots,v_{k-1}\}$; in particular $P'$ is longer than $P$, so is not a shortest path in $Q\setminus\{v_1,\ldots,v_{k-1}\}$. Suppose that $P'$ consists entirely of internal vertices of $Q$. Then removing $v_1,\ldots,v_{k-1}$ leaves a disc quadrangulation $Q'$ for which $P'$ forms part of the boundary, and since each internal vertex of $P'$ was adjacent to at most one of the removed vertices it has degree at least $3$ in the remaining near-quadrangulation. Since $P'$ is not a shortest path between $v_0$ and $v_k$ in $Q'$, this contradicts minimality of the original example. 

If some vertices of $P'$ are boundary vertices of $Q$, we may remove any bridges from $Q'$ and split any cutvertices to leave two or more disc quadrangulations and apply the same argument to each section of $P'$ that remains, noting that each vertex of $P'$ which lies strictly inside such a section has degree at least $3$.
\end{proof}

\begin{lemma}\label{quad-nice-dist}Suppose $Q$ is a chordless disc quadrangulation with all internal vertices having degree at least $4$, and $v_0,\ldots,v_k$ are consecutive boundary vertices, where $k\geq 4$. Then the following are equivalent.
\begin{itemize}\item $v_0,\ldots,v_k$ are the boundary vertices of a nice configuration.
\item $d_Q(v_0,v_k)=k-2$ and $d_Q(v_i,v_j)=j-i$ for all other pairs $0\leq i<j\leq k$.
\end{itemize}
\end{lemma}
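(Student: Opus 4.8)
The plan is to prove both implications by mirroring the structure of Lemma~\ref{nice-distances}, adapting the triangulation arguments to the quadrangulation setting. The forward implication should be the easier direction: assuming $v_0,\ldots,v_k$ bound a nice configuration, I would consider the quadrangulation $Q'$ obtained by deleting the interior boundary vertices $v_1,\ldots,v_{k-1}$, so that the path $v_0w_2w_3\cdots w_{k-2}v_k$ becomes part of the boundary of $Q'$. By Lemma~\ref{quad-geodesic} applied to $Q'$ (the intermediate vertices $w_i$ all have degree at least $3$ in $Q'$, since each lost only the single external neighbour $v_i$), this path is a shortest path, of length $k-2$. As in the triangulation argument, any shortest path in $Q$ between two of the $v_i$ that strays outside the configuration must enter and leave along this boundary path and can be rerouted along it without increasing length, so all the claimed distances are realised inside the configuration, where they are immediate from inspection of Figure~\ref{fig:nice-quad}. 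This gives $d_Q(v_0,v_k)=k-2$ and $d_Q(v_i,v_j)=j-i$ otherwise.

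For the reverse implication I would argue that the distance data force the configuration's structure. The condition $d_Q(v_0,v_k)=k-2$ guarantees a path of length $k-2$ from $v_0$ to $v_k$; since $d_Q(v_i,v_j)=j-i$ and in particular $d_Q(v_0,v_1)=d_Q(v_{k-1},v_k)=1$ while the consecutive distances along the boundary are forced, this path must be disjoint from the interior vertices $v_1,\ldots,v_{k-1}$ (any shared vertex $v_j$ would split the path and contradict the distance $d_Q(v_0,v_j)=j$ or $d_Q(v_j,v_k)=k-j$). Label the internal vertices of this path $w_2,\ldots,w_{k-2}$. I would then show that the cycle $C:=v_0v_1\cdots v_k w_{k-2}\cdots w_2$ must have precisely the chords required for a nice configuration, and no others. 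The key tool here is Lemma~\ref{shortcut-quad}: any chordless subcycle $C'$ of $C$ of length at least $6$ would enclose a subquadrangulation forcing, via Lemma~\ref{shortcut-quad}, some pair of boundary vertices to be strictly closer than the prescribed distance. By carefully casing on which of the $v_i$ and $w_j$ lie on $C'$ — exactly as in the final paragraph of the proof of Lemma~\ref{nice-distances} — each possibility yields a contradiction with one of the given distances, except when all the required quadrilateral chords are present.

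The main obstacle I anticipate is the bookkeeping in this final casing argument. One must verify that the chords forced by the distance constraints are exactly $w_iv_i$ and $w_iv_{i+1}$ (equivalently the edges $v_iw_i$, $w_iv_{i+1}$ completing each quadrilateral face), ruling out ``long'' chords such as $w_iv_j$ with $j$ far from $i$, which would create a shortcut contradicting $d_Q(v_0,v_k)=k-2$ or one of the intermediate distances. A subtlety specific to quadrangulations, absent in the triangulation case, is that the faces are four-cycles, so after establishing the chord structure I must confirm that the region between the path $w_2\cdots w_{k-2}$ and the boundary segment $v_1\cdots v_{k-1}$ is tiled by exactly the quadrilateral faces $v_0v_1v_2w_2$, $w_iv_iv_{i+1}w_{i+1}$, and $v_{k-2}v_{k-1}v_kw_{k-2}$ with no additional internal vertices; chordlessness of $Q$ together with the minimum-degree bound (precluding small enclosed cycles, as recorded in Lemma~\ref{quad-layer}) should close this gap. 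Once the chord structure is pinned down, the conclusion that $w_2,\ldots,w_{k-2}$ are internal vertices (they cannot be boundary vertices, as $Q$ is chordless) and that the stated quadrilaterals are faces follows directly.
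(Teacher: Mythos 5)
Your proof is correct and follows essentially the same route as the paper: Lemma~\ref{quad-geodesic}, applied to the quadrangulation obtained by deleting $v_1,\ldots,v_{k-1}$, drives the forward implication, and the reverse implication extracts the path $w_2\cdots w_{k-2}$, rules out unwanted chords of the cycle $C$, and applies Lemma~\ref{shortcut-quad} to a chordless subcycle to force the edges $v_iw_i$, after which chordlessness and Lemma~\ref{quad-layer} give that the resulting $4$-cycles are faces and the $w_i$ are internal. The only cosmetic difference is in the forward direction, where the paper applies Lemma~\ref{quad-geodesic} directly to $Q$ for interior pairs and handles pairs involving $v_0$ or $v_k$ by induction on $j$, whereas you use a single rerouting argument modelled on Lemma~\ref{nice-distances}; both are valid.
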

\begin{proof}We first prove the forward implication; write $w_2,\ldots w_{k-2}$ for the internal vertices of the nice configuration.

If $0<i<j<k$ this follows directly by applying Lemma \ref{quad-geodesic} to $Q$, since $\deg(v_\ell)=3$ for all $i<\ell<j$. If $i=0$ and $j<k$, we deduce this by induction on $j$ (it is clearly true for $j=1$). Suppose $j>1$ and let $Q'$ be the near-quadrangulation obtained by removing $v_1,\ldots,v_{j-1}$; since each $w_\ell$ is internal for $Q$, this is a disc quadrangulation. Since $v_0,w_2,\ldots,w_j,v_j$ forms part of the boundary of $Q'$, and $\deg_{Q'}(w_{\ell})\geq3$ for $2\leq\ell\leq j$, Lemma \ref{quad-geodesic} gives $d_{Q'}(v_0,v_j)=j$. Since $d_Q(v_0,v_\ell)=\ell$ and $d_Q(v_\ell,v_j)=j-\ell$ (using the induction hypothesis and the previous case) for each $0<\ell<j$, there is no shorter path via $v_\ell$ and so $d_Q(v_0,v_j)=j$. The case $i>0$ and $j=k$ is similar. Finally, $v_0w_2\cdots w_{k-2}v_k$ is a path of length $k-2$, and we cannot have $d_Q(v_0,v_k)<k-2$ since $d_Q(v_0,v_{k-1})=k-1$.

Next we show the reverse implication. The distance conditions ensure that there is a path of length $k-2\geq 2$ from $v_0$ to $v_k$ which is disjoint from $v_1,\ldots,v_{k-1}$. Write $w_2,\ldots w_{k-2}$ for the vertices on the path other than $v_0$ and $v_k$; for ease of notation we also set $w_1=v_0$ and $w_{k-1}=v_k$. Since $Q$ is chordless, each $w_i$ is internal for $2\leq i\leq k-2$. It suffices to show that $v_iw_i$ is an edge of $Q$ for $2\leq i\leq k-2$.

Indeed, the cycle $C:=v_0\cdots v_kw_{k-2}\cdots w_2$ can have no chord other than these, since if $j\neq i$ and $w_iv_j$ is an edge then either the path $v_jw_i\cdots w_{k-2}v_k$ or the path $v_0w_2\cdots w_iv_j$ contradicts a distance condition. Suppose not all of the required chords of $C$ are edges of $Q$. Then there is some chordless cycle $C'$ of length at least $6$ in $T$ whose vertices are vertices of $C$, and the boundary of $C'$ consists of $v_iw_i$, $v_jw_j$ and the paths $w_iw_{i+1}\cdots w_j$ and $v_iv_{i+1}\cdots v_j$ for some $1\leq i< j\leq k-1$.

We consider the subquadrangulation $Q'$ with boundary $C'$. Since $Q'$ is chordless, we have either $\deg_{Q'}(v_i)\geq 3$ or $\deg_{Q'}(w_i)\geq 3$ (or both), and similarly for $v_j,w_j$. Thus Lemma \ref{shortcut-quad} gives that one of the four distances $d_{Q'}(v_i,v_j)$, $d_{Q'}(w_i,v_j)$, $d_{Q'}(v_i,w_j)$ or $d_{Q'}(w_i,w_j)$ is at most $j-i-1$. However, in each case this contradicts a distance condition: respectively, the condition $d_Q(v_i,v_j)=j-i$, $d_Q(v_0,v_j)=j$, $d_Q(v_i,v_k)=k-j$ or $d_Q(v_0,v_k)=k-2$.
\end{proof}

Next we show that identifying a nice configuration will allow us to reduce the problem of determining the near-quadrangulation to a smaller case.
\begin{lemma}\label{quad-nice-reduce}Let $v_0,\ldots, v_k$ and $w_2,\ldots w_{k-2}$ be the vertices of a nice configuration in $Q$, and let $Q'$ be the near-quadrangulation obtained by removing $v_1,\ldots,v_{k-1}$. Let $z$ be any vertex of $Q'$. Then $d_{Q'}(w_i,z)=d_Q(v_i,z)-1$ for each $2\leq i\leq k-2$.
\end{lemma}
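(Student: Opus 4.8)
The plan is to prove the two inequalities $d_{Q'}(w_i,z)\ge d_Q(v_i,z)-1$ and $d_{Q'}(w_i,z)\le d_Q(v_i,z)-1$ separately, exploiting the ``strip'' structure of the removed vertices. Throughout I write $\Pi$ for the path $v_0,w_2,\ldots,w_{k-2},v_k$, which forms part of the boundary of $Q'$, and I record the relevant adjacencies: $v_i\sim w_i$ for $2\le i\le k-2$, the removed vertices induce the path $v_1v_2\cdots v_{k-1}$, and each removed vertex $v_j$ has exactly one neighbour surviving in $Q'$, namely the $(j-1)$-th vertex of $\Pi$ (so $v_1$ exits to $v_0$, $v_j$ to $w_j$, and $v_{k-1}$ to $v_k$).

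The inequality $d_{Q'}(w_i,z)\ge d_Q(v_i,z)-1$ is immediate: removing vertices cannot decrease distances, so $d_{Q'}(w_i,z)\ge d_Q(w_i,z)$, while the edge $v_iw_i$ gives $d_Q(v_i,z)\le d_Q(w_i,z)+1$. For the reverse inequality I would take a shortest $v_i$--$z$ path $P$ in $Q$ and rewrite it as a walk from $w_i$ to $z$ lying entirely in $Q'$ and one step shorter. The key observation is that $P$ meets the strip in maximal runs, each of which is a monotone subpath of $v_1\cdots v_{k-1}$ (since a shortest path is simple and the strip induces a path graph) entered and left through the unique $Q'$-neighbours of its endpoints. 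Because those exit vertices sit on $\Pi$ at positions tracking the strip index, a run passing through the strip from the exit of $v_\alpha$ to the exit of $v_\beta$ has length $|\beta-\alpha|+2$, whereas the corresponding subpath of $\Pi$ has length $|\beta-\alpha|$; replacing each run by its $\Pi$-detour therefore never increases the length. The single exceptional run is the initial one, which starts at $v_i$ itself: replacing the segment from $v_i$ up to its first surviving vertex $u_t$ by the subpath of $\Pi$ from $w_i$ to $u_t$ absorbs the edge $v_iw_i$ and shortens by exactly one. Concatenating all the replacements yields a $w_i$--$z$ walk inside $Q'$ of length at most $|P|-1=d_Q(v_i,z)-1$, giving $d_{Q'}(w_i,z)\le d_Q(v_i,z)-1$ and hence the desired equality.

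I expect the main obstacle to be the careful bookkeeping in this surgery: one must verify that a shortest path really does meet the strip in monotone runs bracketed by vertices of $\Pi$, that the uniform rule ``$v_j$ exits to the $(j-1)$-th vertex of $\Pi$'' holds at both ends of the strip, and that the sole net saving of one step comes from the initial run (handling also the degenerate cases where a run is a single vertex, or where $P$ leaves $v_i$ directly to $w_i$). None of these is deep, but they need to be set up precisely for the length count to come out exactly as $-1$. Note that chordlessness keeps $Q'$ connected, so all the distances above are well defined.
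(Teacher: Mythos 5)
Your argument is correct, but it follows a genuinely different route from the paper's. You split the statement into the trivial lower bound $d_{Q'}(w_i,z)\geq d_Q(w_i,z)\geq d_Q(v_i,z)-1$ and an upper bound obtained by explicit surgery on a shortest $v_i$--$z$ path in $Q$: each maximal excursion of the path into the strip $\{v_1,\ldots,v_{k-1}\}$ is rerouted along the surviving path $\Pi=v_0w_2\cdots w_{k-2}v_k$. The bookkeeping you flag does check out: the listed faces of the nice configuration exhaust the rotation at each $v_j$, so $v_j$ has degree $3$ with unique surviving neighbour $w_j$ for $2\leq j\leq k-2$, while $v_1,v_{k-1}$ have degree $2$ with surviving neighbours $v_0,v_k$; hence every excursion is a monotone subpath of the strip bracketed by vertices of $\Pi$, a non-initial excursion between the exits of $v_\alpha$ and $v_\beta$ costs $\abs{\beta-\alpha}+2$ against $\abs{\beta-\alpha}$ for its detour, and the initial one saves exactly one edge, giving a $w_i$--$z$ walk in $Q'$ of length at most $d_Q(v_i,z)-1$. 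The paper instead first proves that $Q'$ is isometrically embedded in $Q$ (invoking Lemma~\ref{quad-nice-dist} for the path $\Pi$) and then runs a minimal-counterexample induction on $d_Q(v_i,z)$: if the shortest path from $v_i$ does not leave immediately through $w_i$ it must step to $v_{i\pm1}$, and minimality (or, at the end of the strip, the inequality $d_{Q'}(v_0,z)\geq d_{Q'}(w_2,z)-1$) yields a contradiction. Both proofs hinge on the same structural fact about the strip's neighbourhoods; yours is more constructive and, notably, does not need Lemma~\ref{quad-nice-dist} at all, at the price of the run-by-run length accounting, whereas the paper's induction is shorter once the isometric embedding of $Q'$ is in place.
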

\begin{proof}Note first that if $x,y\in V(Q')$ then $d_{Q'}(x,y)=d_Q(x,y)$. This is true when $x,y\in\{v_0,w_2,\ldots,w_{k-2},v_k\}$ by Lemma \ref{quad-nice-dist}, since a shorter route in $Q$ between any two of these vertices would contradict the fact that $d_Q(v_0,v_k)=k-2$. The corresponding fact for arbitrary $x,y$ follows since any path which leaves $Q'$ contains a subpath between two vertices of $\{v_0,w_2,\ldots,w_{k-2},v_k\}$, which can be replaced by a shortest path in $Q'$ between those two vertices.

Now suppose the desired result is not true, and for fixed $z$ choose $i$ for which $d_{Q'}(w_i,z)\neq d_Q(v_i,z)-1$ with $d_{Q}(v_i,z)$ as small as possible. Since there is a path in $Q$ from $v_i$ to $z$ of length $1+d_{Q'}(w_i,z)$, we must have $d_{Q}(w_i,z)=d_{Q'}(w_i,z)>d_Q(v_i,z)-1$ and so the shortest path in $Q$ from $v_i$ to $z$ does not use $w_i$. Consequently we have either $d_Q(v_{i-1},z)=d_Q(v_i,z)-1$ or $d_Q(v_{i+1},z)=d_Q(v_i,z)-1$; assume without loss of generality the former. If $i-1>1$ then $d_{Q'}(w_{i-1},z)\geq d_{Q'}(w_i,z)-1>d_Q(v_{i-1},z)-1$, contradicting our choice of $i$. Otherwise (if $i=2$), we must have $d_{Q'}(v_0,z)=d_Q(v_0,z)=d_Q(v_1,z)-1\leq d_Q'(w_2,z)-2$, but this gives a contradiction since $d_{Q'}(v_0,z)\geq d_{Q'}(w_2,z)-1$.
\end{proof}

We now have all the ingredients needed to complete the proof.
\begin{proof}[Proof of Theorem \ref{main:quad}]
We proceed by induction on the boundary length, $n$. By Lemma \ref{quad-layer}, if $n<8$ then there are no internal vertices and we may identify $Q$ from the pairs at distance $1$. If $Q$ has a chord, we may divide $Q$ along the chord into two subquadrangulations $Q_1$ and $Q_2$, for which we know all boundary distances, and hence by the induction hypothesis can determine in full. 

Thus we may assume $Q$ is chordless and $n\geq 8$. By Lemma \ref{quad-nice-exists} there is a nice configuration, with boundary vertices $v_0,\ldots, v_k$, say, which we may identify by Lemma \ref{quad-nice-dist}. By Lemma \ref{quad-nice-reduce}, together with the induction hypothesis, we may deduce all boundary distances for the near-quadrangulation obtained by removing $v_1,\ldots, v_{k-1}$, and hence determine it precisely. Together with the known edges meeting $v_1,\ldots,v_{k-1}$, this determines $Q$.
\end{proof}

\section{Mixing triangles and quadrangles}\label{sec:counter}

Noting that the conditions of minimum degree $6$ for triangles and $4$ for quadrangles correspond to locally everywhere non-positive curvature, we might conjecture that a similar condition is sufficient to determine planar graphs where all internal faces are triangles or quadrangles by the distances between boundary vertices. In particular, the natural condition to impose on internal vertices is that $2t(v)+3q(v)\geq 12$, where $t(v),q(v)$ are the number of triangular and quadrangular faces containing $v$ respectively. We first show that this has similar properties to the conditions on disc triangulations and quadrangulations, in that it excludes examples of the kind discussed in Section \ref{sec:intro}, that is, obtained by adding extra vertices inside a face.

\begin{prop}Let $G$ be a plane graph with a simple closed boundary of length $n$ for which every internal face is a triangle or quadrangle. Suppose that $G$ has at least one internal vertex, and every internal vertex satisfies $2t(v)+3q(v)\geq 12$. Then $n\geq 6$.
\end{prop}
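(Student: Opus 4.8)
The plan is to combine Euler's formula with the curvature hypothesis to bound the total interior angle along the boundary, then reduce to the chordless case and exploit the minimum internal degree that the hypothesis forces.

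First I would set up the global count. Write $s\ge 1$ for the number of internal vertices and let $f_3,f_4$ be the numbers of triangular and quadrangular internal faces, so that the external face has degree $n$. Euler's formula together with the face-degree identity $\sum_F|F|=2E$ (the internal faces contributing $3f_3+4f_4$ and the external face $n$) gives, after eliminating $E$, the relation $f_3+2f_4=n+2s-2$. I would then assign each triangle corner the angle $\pi/3$ and each quadrangle corner the angle $\pi/2$, so the total angle over all corners is $\pi(f_3+2f_4)=\pi(n+2s-2)$. Regrouping this sum by vertices, each internal vertex $v$ contributes $\tfrac{\pi}{3}t(v)+\tfrac{\pi}{2}q(v)=\tfrac{\pi}{6}\bigl(2t(v)+3q(v)\bigr)\ge 2\pi$ by hypothesis, while each boundary vertex contributes a positive interior angle $\beta_v$. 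Comparing the two expressions yields the key inequality $\sum_{v\in\partial}\beta_v\le(n-2)\pi$; equivalently, in the language of discrete Gauss--Bonnet, non-positive curvature at every internal vertex forces the total boundary turning $\sum_{v\in\partial}(\pi-\beta_v)$ to be at least $2\pi$. Expanding the left-hand side through the boundary degrees recasts this, in the chordless case, as the bound $e^\circ\le 2n-6$, where $e^\circ$ is the number of edges joining the interior to the boundary.

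Next I would reduce to the chordless case. If $G$ has a chord $u_iu_j$, it splits the disc into two discs whose boundaries each consist of an arc of the original boundary together with the chord; when $n\le 5$ each such boundary has length at most $n-1\le 4<6$. All hypotheses (internal faces are triangles or quadrangles, and $2t(v)+3q(v)\ge 12$ at each internal vertex) are inherited by both pieces, the internal vertex lies in one of them, and that piece has strictly fewer chords. Hence, arguing by induction on the number of chords, it suffices to prove the statement when $G$ is chordless.

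Finally, in the chordless case I would use that the hypothesis forces every internal vertex to have degree at least $4$: since $2t(v)+3q(v)=2\deg(v)+q(v)$ and $q(v)\le\deg(v)$, we obtain $3\deg(v)\ge 12$, with equality only when $v$ is surrounded by four quadrangles. When $s=1$ this structural description already produces eight boundary vertices. For general $s$ I would let $H$ be the subgraph induced by the internal vertices, which chordlessness keeps connected, and combine $\sum_{v\text{ internal}}\deg(v)=2e(H)+e^\circ\ge 4s$ with the planar bound on $e(H)$ and the inequality $e^\circ\le 2n-6$ to force $n\ge 6$. I expect the main obstacle to be exactly this last step for larger $s$, where the crude planar edge bound $e(H)\le 3s-6$ weakens and no longer suffices; the clean way to close it is an isoperimetric, layer-peeling estimate in the spirit of \cite[Lemma 2.2]{ABH} and Lemma~\ref{quad-layer}, showing that a nonempty interior of a non-positively curved disc forces the boundary to have length at least $6$.
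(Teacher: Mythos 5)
Your setup is sound --- the Euler/Gauss--Bonnet identity $f_3+2f_4=n+2s-2$, the reduction to the chordless case by induction on chords, and the observation that the hypothesis forces $\deg(v)\geq 4$ at internal vertices are all correct --- but the argument does not close, and the gap is exactly where you suspect. Your final inequality combines $2e(H)+e^\circ\geq 4s$ with $e(H)\leq 3s-6$ and $e^\circ\leq 2n-6$ to get $n\geq 9-s$, which proves nothing once $s\geq 4$; and the Gauss--Bonnet bound on its own only yields $n\geq 4$ in the chordless case (each boundary vertex carries turning at most $\pi/2$ once single-face boundary vertices are forced onto quadrangles, against a total turning requirement of $2\pi$). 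The ``isoperimetric, layer-peeling estimate'' you defer to is not a known black box you can cite for mixed triangle/quadrangle faces under the curvature condition $2t(v)+3q(v)\geq 12$: it is essentially the statement to be proved, so invoking it is circular. As written, the proof is incomplete.

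For comparison, the paper closes this step by a different device: rather than working with the mixed graph directly, it inserts a diagonal into each quadrangle, choosing the diagonal of a quadrangle containing $i$ internal vertices so as to meet at least $\lceil i/2\rceil$ of them (and so as not to create chords). Since $\deg_G(v)=t(v)+q(v)\geq 6-q(v)/2$, summing over internal vertices and accounting for the degree boost $q_1+q_2+2q_3+2q_4$ supplied by the diagonals shows that the resulting triangulation $T$ has \emph{average} internal degree at least $6$, even though the minimum internal degree may be less. Lemma~\ref{layer} was deliberately stated with an average-degree hypothesis so that it applies here and immediately gives $n\geq 6$. If you want to salvage your route, you would need to prove a genuine layer-peeling lemma for the mixed setting; the paper's trick of converting the curvature condition into an average-degree condition on a triangulation lets it reuse machinery already established for triangulations and is the cleaner path.
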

\begin{proof}We may assume that $G$ is chordless, since any counterexample with a chord must contain a chordless counterexample as a subgraph. Note that each internal vertex $v$ has $\deg(v)=t(v)+q(v)\geq 6-q(v)/2$. Consequently, denoting the set of internal vertices by $I(G)$, we have $\sum_{v\in I(G)}\deg(v)\geq 6\abs{I(G)}-q_1/2-q_2-3q_3/2-2q_4$, where $q_i$ is the number of quadrangles containing exactly $i$ internal vertices. We now add a diagonal to each quadrangle to form a near-triangulation $T$. If a quadrangle contains $i$ internal vertices, we may ensure that the diagonal we add meets at least $\lceil i/2\rceil$ internal vertices, and this also ensures that we do not create any new chords (the fact that $G$ is chordless means that every quadrangle meets at least one new vertex). Thus we have 
\[\sum_{v\in I(T)}\deg_T(v)\geq\sum_{v\in I(G)}\deg_G(v)+q_1+q_2+2q_3+2q_4\geq 6\abs{I(T)},\]
and so the average degree of internal vertices of $T$ is at least $6$. Now Lemma \ref{layer} gives the desired result.
\end{proof}

\begin{figure}
\centering
\begin{tikzpicture}[thick]
\draw (0:0) -- (0:1) -- ++(30:1) coordinate (a) -- ++(-90:1) coordinate (b) -- ++(150:1) -- ++(-120:1);
\draw (0:0) -- (-60:1) -- ++(-30:1) coordinate (c) -- ++(-150:1) coordinate (d) -- ++(90:1) -- ++(-180:1);
\draw (0:0) -- (-120:1) -- ++(-90:1) coordinate (e) -- ++(-210:1) coordinate (f) -- ++(30:1) -- ++(120:1);
\draw (0:0) -- (180:1) -- ++(210:1) coordinate (g) -- ++(90:1) coordinate (h) -- ++(-30:1) -- ++(60:1);
\draw (0:0) -- (120:1) -- ++(150:1) coordinate (i) -- ++(30:1) coordinate (j) -- ++(-90:1) -- ++(0:1);
\draw (0:0) -- (60:1) -- ++(90:1) coordinate (k) -- ++(-30:1) coordinate (l) -- ++(210:1) -- ++(-60:1);
\foreach \x in {a,b,c,d,e,f,g,h,i,j,k,l}{\filldraw (\x) circle (0.075);}
\draw (b) -- (c);
\draw (d) -- (e);
\draw (f) -- (g);
\draw (h) -- (i);
\draw (j) -- (k);
\draw (l) -- (a);
\foreach \x in {0,60,120,180,240,300}{\filldraw (\x:1) circle (0.075);}
\filldraw (0:0) circle (0.075);
\node[anchor=north] at (e) {$u$};
\node[anchor=north] at (d) {$v$};

\draw (0:5) -- ++(0:1) coordinate (u1) -- ++(30:1) coordinate (a1) -- ++(-90:1) coordinate (b1) -- ++(150:1) -- ++(-120:1);
\draw (0:5) -- ++(-60:1) coordinate (v1) -- ++(-30:1) coordinate (c1) -- ++(-150:1) coordinate (d1) -- ++(90:1) -- ++(-180:1);
\draw (0:5) -- ++(-120:1) coordinate (w1) -- ++(-90:1) coordinate (e1) -- ++(-210:1) coordinate (f1) -- ++(30:1) -- ++(120:1);
\draw (0:5) -- ++(180:1) coordinate (x1) -- ++(210:1) coordinate (g1) -- ++(90:1) coordinate (h1) -- ++(-30:1) -- ++(60:1);
\draw (0:5) -- ++(120:1) coordinate (y1) -- ++(150:1) coordinate (i1) -- ++(30:1) coordinate (j1) -- ++(-90:1) -- ++(0:1);
\draw (0:5) -- ++(60:1) coordinate (z1) -- ++(90:1) coordinate (k1) -- ++(-30:1) coordinate (l1) -- ++(210:1) -- ++(-60:1);
\foreach \x in {a1,b1,c1,d1,e1,f1,g1,h1,i1,j1,k1,l1}{\filldraw (\x) circle (0.075);}
\draw (b1) -- (c1);
\draw (d1) -- (e1);
\draw (f1) -- (g1);
\draw (h1) -- (i1);
\draw (j1) -- (k1);
\draw (l1) -- (a1);

\draw (c1) -- ++(-90:1) coordinate (aa1) -- ++(-150:1) coordinate (ab1) -- ++(-180:1) coordinate (ac1) -- ++(150:1) coordinate (ad1) -- (f1);
\draw (aa1) -- (d1) -- (ab1);
\draw (ac1) -- (e1) -- (ad1);
\foreach \x in {u1,v1,w1,x1,y1,z1,aa1,ab1,ac1,ad1}{\filldraw (\x) circle (0.075);}
\filldraw (0:5) circle (0.075);

\draw (0:10) -- ++(30:1) coordinate (u2) -- ++(60:1) coordinate (a2) -- ++(-60:1) coordinate (b2) -- ++(180:1) -- ++(-90:1);
\draw (0:10) -- ++(-30:1) coordinate (v2) -- ++(0:1) coordinate (c2) -- ++(-120:1) coordinate (d2) -- ++(120:1) -- ++(-150:1);
\draw (0:10) -- ++(-90:1) coordinate (w2) -- ++(-60:1) coordinate (e2) -- ++(-180:1) coordinate (f2) -- ++(60:1) -- ++(150:1);
\draw (0:10) -- ++(210:1) coordinate (x2) -- ++(240:1) coordinate (g2) -- ++(120:1) coordinate (h2) -- ++(0:1) -- ++(90:1);
\draw (0:10) -- ++(150:1) coordinate (y2) -- ++(180:1) coordinate (i2) -- ++(60:1) coordinate (j2) -- ++(-60:1) -- ++(30:1);
\draw (0:10) -- ++(90:1) coordinate (z2) -- ++(120:1) coordinate (k2) -- ++(0:1) coordinate (l2) -- ++(240:1) -- ++(-30:1);
\foreach \x in {a2,b2,c2,d2,e2,f2,g2,h2,i2,j2,k2,l2}{\filldraw (\x) circle (0.075);}
\draw (b2) -- (c2);
\draw (d2) -- (e2);
\draw (f2) -- (g2);
\draw (h2) -- (i2);
\draw (j2) -- (k2);
\draw (l2) -- (a2);

\draw (d2) -- ++(-90:1) coordinate (aa2) -- ++(-150:1) coordinate (ab2) -- ++(-180:1) coordinate (ac2) -- ++(150:1) coordinate (ad2) -- (g2);
\draw (aa2) -- (e2) -- (ab2);
\draw (ac2) -- (f2) -- (ad2);
\foreach \x in {u2,v2,w2,x2,y2,z2,aa2,ab2,ac2,ad2}{\filldraw (\x) circle (0.075);}
\filldraw (0:10) circle (0.075);
\end{tikzpicture}
\caption{A graph with zero curvature whose boundary distances are preserved by relabelling (left), and two non-isomorphic graphs with identical boundary distances (middle and right).}\label{fig:counter}
\end{figure}

However, this condition is not sufficient to determine the graph from its boundary distances, as the graphs in Figure \ref{fig:counter} show. The left-hand graph has $2t(v)+3q(v)=12$ for every internal vertex $v$, but has the property that each boundary vertex is at distance $1$, $2$ and $3$ respectively from the next three vertices in either direction, and at distance $4$ from the five remaining boundary vertices. In particular, the boundary distances are preserved by a cyclic relabelling of the boundary. Consequently, boundary distances cannot distinguish between the original example and the isomorphic configuration for which $uv$ lies on a triangular face if either appears anywhere in a planar graph. By adding some extra faces we can obtain two non-isomorphic graphs that satisfy the condition on internal vertices but nevertheless have identical boundary distances, for example the middle and right-hand graphs in Figure \ref{fig:counter}. The original example, and one of the non-isomorphic pair but not the other, may be obtained as a section of the rhombitrihexagonal tiling after dividing each hexagon into six triangles.

\section{Algorithmic considerations}\label{sec:algo}
Suppose we are given a set of boundary vertices and all distances between them. The proof of our main result gives a polynomial-time algorithm to determine whether there exists a near-triangulation with that boundary and all internal vertices having degree at least $6$ that achieves those distances (and in fact reconstructs the unique such graph, if one exists). One might naturally ask whether the decision problem can still be solved in polynomial time if we relax the minimum degree condition. In this case a near-triangulation, if it exists, will not be unique, but can we efficiently determine whether it exists? Observe that removing the degree condition gives a genuinely different question, since without it Lemma \ref{shortcut} is not necessarily satisfied.

Similar problems arise in other contexts, particularly for the identification and reconstruction of phylogenetic networks. A (binary) phylogenetic network is a rooted graph where every vertex is either a tree vertex, with two children and (unless it is the root) one parent; a leaf with no children and one parent; or a reticulation vertex with one child and two parents. Such networks represent divergence between taxa in evolutionary history, with reticulation vertices corresponding to hybridisation events. 

Reconstruction of such networks from partial information is a key task in evolutionary biology. In particular, distance-based methods which seek to reconstruct the network from information about the lengths of paths between taxa (that is, leaves) are widely studied; here we may think of the set of leaves as the boundary of the network.
Early work focussed on the simplest case of a phylogenetic tree, that is, a phylogenetic network without reticulation points. Simple necessary and sufficient conditions for a phylogenetic tree realising a set of boundary distances to exist, in terms of a relationship satisfied by the distances between any four vertices, were found by Zarecki\u{\i} \cite{Zar65} and subsequently rediscovered by Buneman \cite{Bun74}. These apply to both weighted and unweighted trees, with an additional parity constraint for the latter, and give a straightforward quartic recognition algorithm. 

If weighted networks are considered, one can always modify some weights while retaining all distances between leaves (for example, by choosing an internal vertex and transferring a fixed small amount of weight from each incoming edge to each outgoing edge), and so reconstruction is only possible up to equivalence under these and other transformations. Consequently we essentially have a recognition problem rather than reconstruction, though of course it is interesting to classify the precise equivalence up to which reconstruction is possible. This has been accomplished recently in various contexts. Bordewich and Semple \cite{BS16} showed that under the assumption that the network is tree-child, that is, every non-leaf vertex has at least one child which is either a leaf or a tree vertex, knowing the lengths of \emph{all} paths between each pair of leaves is sufficient to reconstruct a network efficiently, and this was subsequently extended to weighted networks \cite{BST18}. Results closer to ours, in which only the shortest-path distance between each pair of leaves is given, have been obtained with the additional assumption of either an ultrametric tree-child network \cite{BT16} or a normal tree-child network \cite{BHMS18}. A polynomial-time recognition algorithm was also recently obtained for metrics arising from cactus graphs \cite{HHMM}.

Returning to the question of recognising near-triangulations, it actually makes no difference whether we ask for a near-triangulation or, more simply, for a plane graph which achieves the given boundary distances. This is because any plane graph with given boundary can be extended to a triangulation of that boundary, without changing any distances between existing vertices. To do this, if an internal face has $r>3$ edges, add a band of $2r$ triangles inside that face, leaving another $r$-face inside the band. Repeat this process $\floor{r/4}$ times, and then triangulate the final $r$-face ad lib. Notice that the addition of a band does not change the distance between two original vertices, and the edges added in the final step are too far away from the original vertices to introduce a shorter path. Thus we ask the following.

\begin{question}Given a set $V$ of vertices and all pairwise distances between them, is there a polynomial-time algorithm to determine whether there is a plane graph with simple boundary consisting of $V$ in some cyclic ordering that realises those distances?
\end{question}
Note that, as before, Lemma \ref{lem:hamilton} means that we may assume the cyclic ordering of $V$ is given.

There are some non-trivial necessary conditions on the distances for such a graph to exist (that is, conditions which would not be necessary if the planarity requirement were dropped). In particular, we have the following condition satisfied by any four points (which can easily be checked in polynomial time).
\begin{prop}\label{four-point}If $a,b,c,d$ are boundary vertices appearing in that order, then $d(a,b)+d(c,d)\leq d(a,c)+d(b,d)$.
\end{prop}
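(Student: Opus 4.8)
The plan is to prove the inequality by a single \emph{crossing argument} between two shortest paths, exploiting planarity through a Jordan-curve principle. I would fix a shortest path $P$ from $a$ to $c$ and a shortest path $R$ from $b$ to $d$, so that $P$ has length $d(a,c)$ and $R$ has length $d(b,d)$; these are the two ``diagonals'' of the quadruple. The key geometric input is that $P$ and $R$ must share a vertex. Indeed, since $a,b,c,d$ occur in this cyclic order on the simple closed boundary, the path $P$ (a chord of the disc from $a$ to $c$) separates the disc into two parts, one whose boundary arc contains $b$ and one whose boundary arc contains $d$. Hence $R$ cannot join $b$ to $d$ without meeting $P$, and because distinct edges of a plane graph do not cross in their interiors, any such meeting occurs at a common vertex. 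Fix one such vertex and call it $x$.

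Everything after this is bookkeeping with the triangle inequality. Since $x$ lies on the shortest path $P$ we have $d(a,x)+d(x,c)=d(a,c)$, and since $x$ lies on $R$ we have $d(b,x)+d(x,d)=d(b,d)$. The triangle inequality gives $d(a,b)\le d(a,x)+d(x,b)$ and $d(c,d)\le d(c,x)+d(x,d)$. Adding these and regrouping the four right-hand terms as $\bigl(d(a,x)+d(x,c)\bigr)+\bigl(d(b,x)+d(x,d)\bigr)$ yields $d(a,b)+d(c,d)\le d(a,c)+d(b,d)$. Note that it is exactly the pair of \emph{opposite} sides $ab$ and $cd$ (the pair cut apart by the two diagonals) that is controlled in this way; this is the content of the four-point condition, and the same bookkeeping does not close up for an adjacent pair of sides.

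I expect the only genuine obstacle to be making the crossing claim fully rigorous in the discrete planar setting, since the remainder is immediate. I would fix a plane embedding and form the closed curve consisting of $P$ together with the boundary arc from $c$ back to $a$ that passes through $d$; by the Jordan curve theorem $b$ lies strictly on one side of this curve and $d$ on the other, so the curve traced by $R$ must cross it, and as the edge-interiors of the embedding are pairwise disjoint the crossing point is a vertex lying on both $P$ and $R$. A purely combinatorial alternative is to observe that $P$, together with the two boundary arcs, partitions the vertex set into two classes separating $b$ from $d$, so some edge of $R$ joins the two classes; planarity then forces an endpoint of that edge to lie on $P$, again producing the common vertex $x$.
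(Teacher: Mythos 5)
Your proof is correct and is essentially identical to the paper's: both fix shortest paths along the two ``diagonals'' $a$--$c$ and $b$--$d$, use planarity of the disc to force a common vertex $x$, and then combine the equalities along the shortest paths with two triangle inequalities. Note that what you prove (and what the paper's own argument actually establishes) is the four-point inequality $d(a,b)+d(c,d)\leq d(a,c)+d(b,d)$; the term $d(b,c)$ in the printed statement is evidently a typo for $d(c,d)$, and the literal inequality as printed is in fact false (e.g.\ a $10$-cycle $a x_1 x_2 x_3 b x_5 x_6 x_7 c d a$ with one internal vertex adjacent to $a$ and $c$ gives $d(a,b)+d(b,c)=8>7=d(a,c)+d(b,d)$), so your remark that it is precisely the opposite pair of sides that is controlled is the right reading.
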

\begin{proof}Note that if $P_1$ is an $a$-$c$ path and $P_2$ is a $b$-$d$ path then $P_1$ and $P_2$ must intersect. Choose $P_1$ and $P_2$ to be shortest paths, and let $x$ be a point of intersection. Now since $x$ is on each shortest path we have $d(a,c)=d(a,x)+d(x,c)$ and $d(b,d)=d(b,x)+d(x,d)$. However, $d(a,b)\leq d(a,x)+d(x,b)$ and $d(c,d)\leq d(c,x)+d(x,d)$, giving the result.
\end{proof}
However, Proposition \ref{four-point} (together with the distances forming an integer-valued metric) is not sufficient to ensure planarity. For example, boundary distances for the graph shown in Figure \ref{fig:non-plane} satisfy Proposition \ref{four-point}, but no plane graph has the same boundary and boundary distances, since the only way to achieve $d(v_0,v_3)=d(v_1,v_6)=d(v_4,v_7)=2$ without crossing edges or adding external vertices is to identify $x$ and $y$, reducing $d(v_0,v_4)$.

\begin{figure}[ht]
\centering
\begin{tikzpicture}[thick]
\filldraw (0,0) circle (0.05) node [anchor=north east] {$v_0$};
\filldraw (1,-0.5) circle (0.05) node [anchor=north] {$v_1$};
\filldraw (2,-0.5) circle (0.05) node [anchor=north] {$v_2$};
\filldraw (3,0) circle (0.05) node [anchor=north west] {$v_3$};
\filldraw (3,1) circle (0.05) node [anchor=south west] {$v_4$};
\filldraw (2,1.5) circle (0.05) node [anchor=south] {$v_5$};
\filldraw (1,1.5) circle (0.05) node [anchor=south] {$v_6$};
\filldraw (0,1) circle (0.05) node [anchor=south east] {$v_7$};
\filldraw (1,0) circle (0.05) node [anchor=south east] {$x$};
\filldraw (1.5,1) circle (0.05) node [anchor=north] {$y$};
\draw (0,0) -- (1,-0.5) -- (2,-0.5) -- (3,0) -- (3,1) -- (2,1.5) -- (1,1.5) -- (0,1) -- cycle;
\draw (0,0) -- (3,0);
\draw (1,-0.5) -- (1,1.5);
\draw (0,1) -- (3,1);
\end{tikzpicture}
\caption{A graph whose boundary distances satisfy Proposition \ref{four-point} but are not consistent with a plane graph having the same boundary.}\label{fig:non-plane}
\end{figure}

\section*{Acknowledgements}
Research supported by the European Research Council (ERC) under the European Union's Horizon 2020 research and innovation programme (grant agreement no.\ 639046), and by the UK Research and Innovation Future Leaders Fellowship MR/S016325/1. I am grateful to Itai Benjamini, Agelos Georgakopoulos, Christoforos Panagiotis and Alex Scott for many helpful comments at different stages of this work.

\end{document}